\newcommand{\eps}{\varepsilon}
\DeclareMathOperator{\Span}{span}
\DeclareMathOperator{\diag}{diag}
\newcommand*{\C}{{\mathbb{C}}}     
\newcommand*{\R}{{\mathbb{R}}}
\newcommand*{\N}{{\mathbb{N}}}
\newcommand*{\Lin}{{\mathcal{L}}}   
\newcommand*{\Dom}{{\mathcal{D}}}   
\newcommand{\ran}{{\mathcal{R}}}   
\renewcommand{\ker}{{\mathcal{N}}}
\newcommand*{\abs} [1]{\lvert#1\rvert}
\newcommand*{\norm}[1]{\lVert#1\rVert}
\newcommand*{\set} [1]{\{#1\}}
\newcommand*{\setm}[2]{\{\,#1\mid#2\,\}}   
\newcommand*{\iprod}[2]{\langle#1,#2\rangle}
\newcommand*{\Lp}[1][p]{L^{#1}}
\newcommand*{\Xloc}[1]{#1_{\text{loc}}} 
\newcommand*{\Lploc}[1][p]{\Xloc{L^{#1}}}
\newcommand{\pmat}[1]{\begin{pmatrix}#1\end{pmatrix}}
\newcommand{\pmatsmall}[1]{\begin{psmallmatrix}#1\end{psmallmatrix}}
\newcommand*{\Abs}[2][default]{\ifthenelse{\equal{#1}{default}}{\left\lvert#2\right\rvert}{\ldelim{#1}{\lvert}#2\rdelim{#1}{\rvert}}}
\newcommand*{\Norm}[2][default]{\ifthenelse{\equal{#1}{default}}{\left\lVert#2\right\rVert}{\ldelim{#1}{\lVert}#2\rdelim{#1}{\rVert}}}
\newcommand*{\Iprod}[3][default]{\ifthenelse{\equal{#1}{default}}{\left\langle#2,#3\right\rangle}{\ldelim{#1}{\langle}#2,#3\rdelim{#1}{\rangle}}}
\newcommand*{\Dualpair}[3][default]{\ifthenelse{\equal{#1}{default}}{\left\langle#2,#3\right\rangle}{\ldelim{#1}{\langle}#2,#3\rdelim{#1}{\rangle}}}
\newcommand*{\List}[2][1]{\set{#1,\ldots,#2}}
\newcommand{\eq}[1]{\begin{align*}#1\end{align*}}
\newcommand{\eqn}[1]{\begin{align}#1\end{align}}
\newcommand{\gs}{\sigma}
\newcommand{\ga}{\alpha}
\newcommand{\gd}{\delta}
\newcommand{\gl}{\lambda}
\newcommand{\gw}{\omega}
\newcommand{\ieq}[1]{$#1$}
\newcommand{\inv}{^{-1}}
\newcommand*{\ddb}[2][1]{\ifthenelse{\equal{#1}{1}}{\frac{d}{d#2}}{\frac{d^{#1}}{d#2^{#1}}}}
\newcommand*{\pd}[3][1]{\ifthenelse{\equal{#1}{1}}{\frac{\partial{#2}}{\partial{#3}}}{\frac{\partial^{#1}{#2}}{\partial#3^{#1}}}}
\newcommand*{\keyterm}[1]{\emph{#1}}
\newcommand{\Pop}{\mathcal{P}}
\newcommand{\Pdop}{\mathcal{P}_d}
\newcommand{\Poppert}{\tilde{\mathcal{P}}}
\newcommand{\Pdoppert}{\tilde{\mathcal{P}}_d}
\newcommand{\YP}{Q_N}
\newcommand{\FBop}{K_s}
\newcommand{\KA}{G_A}
\newtheorem{thm}{Theorem}[section]
\newtheorem{lem}[thm]{Lemma}
\newtheorem{cor}[thm]{Corollary}
\theoremstyle{definition}
\newtheorem{dfn}[thm]{Definition}
\newtheorem{rem}[thm]{Remark}
\newtheorem*{ORPff}{The Feedforward Output Regulation Problem}
\newtheorem*{ORPfb}{The Error Feedback Output Regulation Problem}
\newtheorem*{RORP}{The Robust Output Regulation Problem}
\newcommand{\yref}{y_{\mbox{\scriptsize\textit{ref}}}}
\newcommand{\wdist}{w_{\mbox{\scriptsize\textit{dist}}}}
\newcommand{\yrefpert}{\tilde{y}_{\mbox{\scriptsize\textit{ref}}}}
\newcommand{\wdistpert}{\tilde{w}_{\mbox{\scriptsize\textit{dist}}}}
\newcommand{\ydk}[1][k]{y_{d,#1}}
\newcommand{\vdvec}{v_{\mbox{\scriptsize\textit{cf}}}}
\newcommand{\vdvecdef}{\vdvec = (v_1,\ldots,v_q)^T}
\newcommand{\pinv}{^{\dagger}}
\newcommand{\FM}{U }
\newcommand{\Lted}[1]{{\bf#1}}
\newcommand{\Ltedop}[1]{\mathbb{#1}}
\newcommand{\persym}{\tau}
\newcommand{\sysops}{(A,B,C,D,E,F)}
\newcommand{\sysopspert}{(\tilde{A},\tilde{B},\tilde{C},\tilde{D},\tilde{E},\tilde{F})}
\newcommand{\syspar}{(A(t),B(t),B_d(t),C(t),D(t))}
\newcommand{\sysparpert}{(\tilde{A}(t),\tilde{B}(t),\tilde{B}_d(t),\tilde{C}(t),\tilde{D}(t))}
\newcommand{\yrefn}{y^{\mbox{\scriptsize\textit{ref}}}_{n}}
\newcommand{\yrefnL}{\Lted{y}^{\mbox{\scriptsize\textit{ref}}}_{n}}
\newcommand{\yrefL}{\Lted{y}_{\mbox{\scriptsize\textit{ref}}}}
\newcommand{\wdistnL}{\Lted{w}_{n}}
\newcommand{\FFfun}{u_{reg}}
\newcommand{\IEQsol}{u}
\renewcommand{\pmat}[1]{\begin{bmatrix}#1\end{bmatrix}}
\renewcommand{\pmatsmall}[1]{\begin{bsmallmatrix}#1\end{bsmallmatrix}}
\begin{document}

\title{Robust Output Regulation for Continuous-Time Periodic Systems}

\thispagestyle{plain}

\author{Lassi Paunonen}
\address{Department of Mathematics, Tampere University of Technology, PO.\ Box 553, 33101 Tampere, Finland}
\email{lassi.paunonen@tut.fi}
\thanks{The research is funded by the Academy of Finland grant number 298182.}

\begin{abstract}
We consider controller design for robust output tracking and disturbance rejection for continuous-time periodic linear systems with periodic reference and disturbance signals. As our main results we present four different controllers: A feedforward control law and a discrete-time dynamic error feedback controller for output tracking and disturbance rejection, a robust discrete-time feedback controller, and finally a discrete-time feedback controller that achieves approximate robust output tracking and disturbance rejection. The presented constructions are also new for time-invariant finite and infinite-dimensional systems. The results are illustrated with two examples: A periodically time-dependent system of harmonic oscillators and a nonautonomous two-dimensional heat equation with boundary disturbance. 
\end{abstract}

\subjclass[2010]{%
93C05, 
93B52 
(93B28)
}
\keywords{Robust output regulation, periodic system, controller design, feedback.} 

\maketitle

\section{Introduction}
\label{sec:intro}

In this paper we study the \keyterm{output regulation problem} for an exponentially stable $\persym$-periodic system of the form
\begin{subequations}
  \label{eq:plantintro}
  \eqn{
  \label{eq:plantintro1}
  \hspace{-1ex}\dot{x}(t)&= A(t)x(t)+B(t)u(t)+B_d(t)\wdist(t) , ~ x(0)=x_0\\
  \hspace{-1ex}y(t)&=C(t)x(t)+D(t)u(t)
  }
\end{subequations}
with state space $X$, input space $U_0$, and output space $Y_0$.
The main goal in our control problem is to design a control law in such a way that the output $y(t)\in Y_0$ converges asymptotically to a $\tau$-periodic reference signal $\yref(\cdot)$ despite the external $\tau$-periodic reference signal $\wdist(\cdot)$.
In the \keyterm{robust output regulation problem} we in addition require that the same controller achieves the output tracking even for perturbed parameters 
$(\tilde{A}(t),\tilde{B}(t),\tilde{B}_d(t),\tilde{C}(t),$ $\tilde{D}(t))$
of the system~\eqref{eq:plantintro}.
Throughout the paper we consider systems~\eqref{eq:plantintro} on a Banach or Hilbert space $X$. This class of systems includes a wide range of nonautonomous partial differential equations, delay equations, and infinite systems of ordinary differential equations. However, the presented results are also new and directly applicable for finite-dimensional periodic systems on $X=\C^n$ and $X=\R^n$.
Finally, our results offer three new controllers for output regulation of linear time-invariant systems with nonsmooth periodic reference and disturbance signals.

We consider two types of control laws. The first type is a static $\tau$-periodic control law 
\eqn{
\label{eq:uffintro}
u\in
\Lploc[2](0,\infty;U_0),
\qquad u(\cdot)=\FFfun(\cdot) \quad \mbox{on} \quad [0,\persym],
}
where the function $\FFfun\in\Lp[2](0,\persym;U_0)$ is computed based on $\yref(\cdot)$ and $\wdist(\cdot)$.
The second type is a dynamic feedback control law 
\begin{subequations}
  \label{eq:fbcontrintro}
  \eqn{
  \label{eq:ufbintro}
  u(\cdot)=Kz_n \quad \mbox{on} \quad [n\persym,(n+1)\persym), \quad n\geq 0 \hspace{-1ex}
  }
  where 
  $K\in\Lin(Z, \Lp[2](0,\persym;U_0))$
  and where $z_n$ is the state of the discrete-time controller
  \eqn{
  \label{eq:udynintro}
  z_{n+1} &= G_1 z_n + G_2(y( n\persym+\cdot)-\yref(\cdot)), \quad z_0\in Z,  
  }
\end{subequations}
with $G_1\in \Lin(Z)$ and $G_2 \in \Lin(\Lp[2](0,\persym;Y_0),Z)$.
In this control configuration the control input $u(\cdot)$ on the interval $[n\persym,(n+1)\persym)$ for $n\geq 0$ is determined by $z_n$, which for $n\geq 1$ in turn depends on $z_{n-1}$ and the output $y(\cdot)$  on  $[(n-1)\persym,n\persym)$.

In this paper we present four different controllers. The first controller is a static $\persym$-periodic control law of the form~\eqref{eq:uffintro} that solves the output regulation problem in the situation where both the reference signal $\yref(\cdot)$ and the disturbance signal $\wdist(\cdot)$ are known functions. The second controller is a finite-dimensional discrete-time feedback controller~\eqref{eq:fbcontrintro} that achieves output regulation for any disturbance signal $\wdist(\cdot)$ that is a linear combination of a finite number of known $\tau$-periodic functions. This controller can in particular be used when the frequencies of the disturbance signal $\wdist(\cdot)$ are known, but the amplitudes and the phases are unknown.
 
The last two controllers presented in the paper are designed to solve the robust output regulation problem. Our third controller is a robust discrete-time feedback controller that achieves output tracking and disturbance rejection even under perturbations and uncertainties in the parameters of the system~\eqref{eq:plantintro}. We will see that the internal model principle~\cite{FraWon75a,Dav76,PauPoh10} implies that in order to tolerate arbitrary small perturbations in the system~\eqref{eq:plantintro}, the controller~\eqref{eq:fbcontrintro} must necessarily be infinite-dimensional. However, we will also show that if the goal of the asymptotic output tracking is relaxed to \keyterm{approximate} convergence of the output $y(t)$ to the reference signal $\yref(\cdot)$, then the robust output regulation problem can be solved with a finite-dimensional controller. In particular, the fourth and final controller we present is a finite-dimensional discrete-time feedback controller that achieves approximate output tracking 
in the sense that the regulation error becomes small as $t\to \infty$,
and is robust with respect to small perturbations in the parameters of the system. 
 
The constructions of the controllers are completed using two operators 
$\Pop\in \Lin(\Lp[2](0,\persym;U_0),\Lp[2](0,\persym;Y_0))$ and  $\Pdop\in \Lin(\Lp[2](0,\persym;U_{d0}),\Lp[2](0,\persym;Y_0))$ associated to the periodic system~\eqref{eq:plantintro}.
If we denote by $U_A(t,s)$, $t\geq s$, the strongly continuous evolution family associated to~\eqref{eq:plantintro1}~\cite{EngNag00book}, then the operators $\Pop$ and $\Pdop$ are
defined in such a way that for all $u\in \Lp[2](0,\persym;U_0)$ and $w\in \Lp[2](0,\persym;U_{d0})$
\eq{
\Pop u &=  C(\cdot)\int_0^\persym \hspace{-.8ex}\KA(\cdot,s)B(s)u(s)ds + C(\cdot)\int_0^\cdot \hspace{-.5ex}\FM_A(\cdot,s)B(s)u(s)ds + D(\cdot) u(\cdot) 
}
\eq{
\Pdop w &= C(\cdot)\int_0^\persym \hspace{-.8ex}\KA(\cdot,s)B_d(s)w(s)ds + C(\cdot)\int_0^\cdot \hspace{-.5ex}\FM_A(\cdot,s)B_d(s)w(s)ds
}
where 
  \eq{
  \KA(t,s) &= \FM_A(t,0)(I-\FM_A(\persym,0))\inv \FM_A(\persym,s).
  }
The operator $I-\FM_A(\persym,0)$ is boundedly invertible since~\eqref{eq:plantintro} is exponentially stable.  
The operators $\Pop$ and $\Pdop$ describe the steady state output of the stable periodic system~\eqref{eq:plantintro} under $\persym$-periodic inputs $u(\cdot)$ and disturbances $\wdist(\cdot)$, respectively. 
In particular, we will show that if $u(\cdot)\in \Lploc[2](\R,U_0)$ and $\wdist(\cdot)\in \Lploc[2](\R,U_{d0})$ are periodic extensions of the functions $u_0(\cdot)$ and $w_0(\cdot)$ on $[0,\persym]$, then the output $y(t)$ of~\eqref{eq:plantintro} converges to the $\persym$-periodic extension of the function $y_0=\Pop u_0 + \Pdop w_0$ in the sense that 
\eq{
\norm{y(n\persym+\cdot)-y_0(\cdot)}_{\Lp[2](0,\persym)}\to 0 \qquad \mbox{as} \quad n\to \infty.
}
The choices of the controller parameters are based on solutions of linear equations of the form
\ieq{
y_0 = \Pop u_0
}
for certain $y_0\in \Lp[2](0,\persym;Y_0)$. The form of the operator $\Pop$ implies that $u_0\in \Lp[2](0,\persym;U_0)$ can be obtained as a solution of a Volterra--Fredholm integral equation. 
We will further show that for a stable periodic system the operator $\Pop$ and the function $u_0$ can be approximated based on measurements taken from the system~\eqref{eq:plantintro} using a 
straightforward procedure introduced in Section~\ref{sec:P1measurement}.

Output regulation of finite-dimensional nonautonomous systems has been studied in several references~\cite{Pin93,MarTom00,IchKat06,ZheSer06,ZonZhe09,ZheZon09}. Moreover, robust controllers based on time-dependent internal models have been introduced in~\cite{ZheSer09}.
For infinite-dimensional periodic systems the output regulation problem was studied in~\cite{PauPoh12b} for an autonomous system and reference and disturbance signals generated by a periodic exosystem. In this paper we employ the so-called \keyterm{lifting technique}~\cite{MeyBur75,BamPea91} to introduce novel controllers for finite and infinite-dimensional periodic systems. Lifting has been successfully used in the study of robust output regulation for periodic discrete-time systems in~\cite{GraLon91a,GraLon91b,GraLon96,JemDav03,NagYam09}. However, extending the lifting approach to continuous-time systems poses many mathematical challenges due to the infinite-dimensional input and output spaces of the resulting lifted systems. In this paper we demonstrate that the lifting approach remains a powerful tool also in controller design for continuous-time periodic systems. In particular, controller design for lifted system leads naturally to discrete-time dynamic error feedback controllers of the form~\eqref{eq:fbcontrintro}.

The lifting approach for robust output regulation of continuous-time systems was first used in~\cite{PauCT15} for finite-dimensional periodic systems without disturbance rejection. The third controller presented in this paper generalizes the controller in~\cite{PauCT15} to infinite-dimensional systems with external disturbance signals, and weakens the assumptions required in the construction. The first controller in this paper extends the feedforward control law originally presented in~\cite{PauPoh12b} for autonomous systems with periodic exosystems. We show that the presented construction of the feedforward control law using the operators $\Pop$ and $\Pdop$ is equivalent to the solution of the \keyterm{periodic regulator equations} consisting of an infinite-dimensional Sylvester differential equation and a regulation constraint. The two discrete-time error feedback controllers that are presented for output regulation and for approximate robust output regulation are completely new. 

We illustrate the theoretic results with two examples. In the first example we consider a system consisting of two harmonic oscillators with periodic damping and periodic coupling. In the second example we design controllers for output tracking and robust output tracking for a periodically time-dependent two-dimensional heat equation with boundary disturbances.

The paper is organized as follows. In Section~\ref{sec:objectivesandass} we state the standing assumptions on the system~\eqref{eq:plantintro} and formulate the main control problems. 
The constructions of all the controllers are presented in Section~\ref{sec:contrdesign}. 
The proofs of the main theorems are presented separately in Section~\ref{sec:proofsmain}.
In Section~\ref{sec:P1measurement} we present a method for approximating the operators $\Pop$ and $\Pdop$ based on measurements from the system~\eqref{eq:plantintro}.
The examples where we consider controller design for the system of harmonic oscillators and the periodic heat equation are presented in Sections~\ref{sec:HarmOsc} and~\ref{sec:heatsys}, respectively. Section~\ref{sec:conclusions} contains concluding remarks.

If $X$ and $Y$ are Banach spaces and $A:X\rightarrow Y$ is a linear operator, we denote by $\Dom(A)$, $\ker(A)$ and $\ran(A)$ the domain, kernel and range of $A$, respectively. 
The space of bounded linear operators from $X$ to $Y$ is denoted by $\Lin(X,Y)$.
If \mbox{$A:X\rightarrow X$,} then $\gs(A)$ and $\rho(A)$ denote the spectrum and the \mbox{resolvent} set of $A$, respectively. For $\gl\in\rho(A)$ the resolvent operator is  \mbox{$R(\gl,A)=(\gl -A)^{-1}$}. If $X$ and $Y$ are Hilbert spaces then $A^\ast$ is the adjoint of $A\in \Lin(X,Y)$.  The inner product on a Hilbert space is denoted by $\iprod{\cdot}{\cdot}$.
The space of $\persym$-periodic $X$-valued functions is denoted by $C_\persym(\R,X)$.

\section{Standing Assumptions and Control Objectives}
\label{sec:objectivesandass}

We begin by stating the standing assumptions on the system~\eqref{eq:plantintro}.
The parameters $(A(\cdot),B(\cdot),B_d(\cdot),C(\cdot),D(\cdot))$ are operator-valued $\persym$-periodic functions satisfying
$B(\cdot)\in \Lp[\infty](\R,\Lin(U_0,X))$, $B_d(\cdot)\in \Lp[\infty](\R,\Lin(U_{d0},X))$, $C(\cdot)\in \Lp[\infty](\R,\Lin(X,Y_0))$, and $D(\cdot)\in \Lp[\infty](\R,\Lin(U_0,Y_0))$,
where
$U_0$, $U_{d0}$, and $Y_0$ are Hilbert spaces.  
We denote $U=\Lp[2](0,\persym;U_0)$, $U_d=\Lp[2](0,\persym;U_{d0})$, and $Y = \Lp[2](0,\persym;Y_0)$.
We assume 
there exists a strongly continuous evolution family $U_A(t,s)$~\cite{EngNag00book} 
satisfying $\FM_A(t,t)=I$ and $\FM_A(t,r)\FM_A(r,s)=\FM_A(t,s)$ for all $t\geq r\geq s$ such that 
for all $u\in\Lploc[1](\R;U_0)$ the system~\eqref{eq:plantintro} has a well-defined mild state given by
\eq{
x(t) &= \FM_A(t,0)x_0 + \int_0^t \FM_A(t,s)B(s)u(s)ds.
}
If space $X$ is finite-dimensional, then the evolution family $U_A(t,s)$ is given by the fundamental matrix of the ordinary differential equation~\eqref{eq:plantintro1}. More generally, the assumption is in particular true if $A(\cdot)=A_0+A_1(\cdot)$ where $A_0: \Dom(A_0) \subset X\to X$ generates a strongly continuous semigroup $T_0(t)$ on $X$ and 
$A_1\in \Lp[\infty](\R, \Lin(X))$ 
is $\persym$-periodic. In this case the strongly continuous evolution family $\FM_A(t,s)$ is uniquely determined by 
the integral equations
\eq{
\FM_A(t,s)x = T_0(t-s)x + \int_s^t T_0(t-r)A_1(r)\FM_A(r,s)xdr.
}
The perturbed systems considered in the robust output regulation problem are assumed to satisfy the same standing assumptions as the nominal system~\eqref{eq:plantintro}.
The fact that $A(\cdot)$ is $\persym$-periodic implies that $\FM_A(t+\persym,s+\persym)=\FM_A(t,s)$ for all $t\geq s$.
In this paper we study the control of stable systems, and we therefore assume that the evolution family $\FM_A(t,s)$ is  exponentially stable, i.e., there exist $M,\gw>0$ such that $\norm{\FM_A(t,s)}\leq Me^{-\gw(t-s)}$ for all $t\geq s$. 
The following characterization of exponential stability of a periodic evolution family $\FM_A(t,s)$ follows from the property $\FM_A(n\persym,0)=\FM_A(\persym,0)^n$ for all $n\in\N$ and~\cite[Prop. II.1.3]{Eis10book}.

\begin{lem}
  \label{lem:UAexpstabchar}
If $A(\cdot)$ is $\persym$-periodic, then
$\FM_A(t,s)$ is exponentially stable if and only if
$\abs{\gl}<1$ for all $\gl\in \gs(\FM_A(\persym,0))$.  
\end{lem}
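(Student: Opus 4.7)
The plan is to reduce the continuous-time stability question to a discrete-time one for the monodromy (period map) operator $\FM_A(\persym,0)$, using the identity $\FM_A(n\persym,0)=\FM_A(\persym,0)^n$ and the spectral radius formula, and then to recover the continuous-time exponential bound by interpolating between period multiples with a uniform bound on short-time propagators.

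First I would verify the identity $\FM_A(n\persym,0)=\FM_A(\persym,0)^n$. By induction, the cocycle property gives $\FM_A((n+1)\persym,0)=\FM_A((n+1)\persym,n\persym)\FM_A(n\persym,0)$, and by $\persym$-periodicity of $A(\cdot)$ we have $\FM_A((n+1)\persym,n\persym)=\FM_A(\persym,0)$, which yields the claim. This is the bridge between the two formulations.

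For the direction ($\Rightarrow$), exponential stability gives $\norm{\FM_A(\persym,0)^n}=\norm{\FM_A(n\persym,0)}\leq Me^{-\gw n\persym}$, so by the Gelfand spectral radius formula $r(\FM_A(\persym,0))=\lim_n\norm{\FM_A(\persym,0)^n}^{1/n}\leq e^{-\gw\persym}<1$, which forces every $\gl\in\gs(\FM_A(\persym,0))$ to satisfy $\abs{\gl}<1$. For the direction ($\Leftarrow$), the spectral assumption together with the fact that the spectrum of a bounded operator is compact yields $r(\FM_A(\persym,0))<1$, so by Gelfand there exist $\tilde M\geq 1$ and $\tilde\gw>0$ with $\norm{\FM_A(\persym,0)^n}\leq \tilde M e^{-\tilde\gw n}$ for all $n\in\N$. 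Writing an arbitrary $t\geq s\geq 0$ as $t-s=n\persym+r$ with $n\in\N$ and $0\leq r<2\persym$, the cocycle property and periodicity give $\FM_A(t,s)=\FM_A(t,s+n\persym)\FM_A(\persym,0)^n\FM_A(0,s-\lfloor s/\persym\rfloor\persym)$ (or an analogous factorization keeping the short-time propagators acting on intervals of length at most $2\persym$), so the estimate follows once the short-time propagators are uniformly bounded.

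The main obstacle is precisely this uniform bound $\sup\setm{\norm{\FM_A(t,s)}}{0\leq t-s\leq 2\persym}<\infty$. I would derive it from the strong continuity of $\FM_A$ together with $A(\cdot)\in \Lp[\infty](\R,\Lin(X))$ in the applicable setting, or more generally from the standing assumption that $\FM_A$ is a strongly continuous evolution family, which together with the uniform boundedness principle applied on the compact set $\setm{(t,s)}{0\leq s\leq t\leq 2\persym}$ yields a uniform operator bound. Combining this uniform bound with the exponential decay of $\norm{\FM_A(\persym,0)^n}$ produces constants $M',\gw'>0$ with $\norm{\FM_A(t,s)}\leq M'e^{-\gw'(t-s)}$ for all $t\geq s$, completing the equivalence.
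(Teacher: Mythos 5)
Your proposal is correct and follows essentially the same route as the paper: the paper likewise reduces everything to the identity $\FM_A(n\persym,0)=\FM_A(\persym,0)^n$ and then simply cites \cite[Prop. II.1.3]{Eis10book} for the equivalence between exponential decay of the powers and spectral radius strictly less than one, which you instead prove directly via Gelfand's formula together with the uniform bound on short-time propagators. The only blemish is the displayed factorization containing $\FM_A(0,s-\lfloor s/\persym\rfloor\persym)$, which is not well-formed since an evolution family requires its first argument to dominate the second, but your parenthetical hedge indicates the correct sandwich $\FM_A(t,s)=\FM_A(r_0,0)\FM_A(\persym,0)^{k}\FM_A(\persym,s_0)$, so the argument stands.
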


\begin{rem}
  \label{rem:unbddIO}
  Even though we assumed that the values of the functions $B(\cdot)$, $B_d(\cdot)$, and $C(\cdot)$  are bounded linear operators, 
  the results in this paper remain valid also for certain classes of systems where $B(t)$, $B_d(t)$, and $C(t)$ are unbounded operators~\cite{Sch02}.
  In particular, it is sufficient to pose conditions under which the lifted system in Section~\ref{sec:liftedsys} is well-defined.
  This requirement is in particular satisfied if the system~\eqref{eq:plantintro} 
  is a time-invariant 
  \keyterm{regular linear system}~\cite{Wei94}.
\end{rem}

\begin{rem}
The results in this paper can also be used for constructing controllers for unstable systems if the system~\eqref{eq:plantintro} can first be stabilized with either state feedback $u(t) = \FBop(t)x(t) + \tilde{u}(t)$ 
with a $\persym$-periodic $\FBop(\cdot)\in \Lp[\infty](\R,\Lin(X,U_0))$ (if $x(t)$ is available for feedback) or with
output feedback $u(t) = \FBop(t)y(t) + \tilde{u}(t)$ with a $\persym$-periodic $\FBop(\cdot)\in \Lp[\infty](\R,\Lin(Y_0,U_0))$ such that $(I-\FBop(\cdot)D(\cdot))\inv \in \Lp[\infty](\R,\Lin(U_0))$.
The controllers can then be designed for the stabilized system with the new input $\tilde{u}(t)$ provided that the stabilized system has a well-defined mild state given by a strongly continuous evolution family. 
For infinite-dimensional systems sufficient conditions for this property are presented, e.g., in~\cite{HinPri94,Sch02}, and~\cite[Sec. VI.9.c]{EngNag00book}.
\end{rem}

Throughout the paper we assume that the reference and disturbance signals are $\persym$-periodic functions such that 
\eqn{
\label{eq:wdistform}
\yref(\cdot)\in \Lploc[2](0,\infty;Y_0) , 
\quad \mbox{and} \quad
\wdist(\cdot)&= \sum_{k=1}^q v_k \wdist^k(\cdot)
} 
for some $\persym$-periodic functions $\wdist^k(\cdot)\in \Lploc[2](0,\infty;U_{d0})$ and some unknown coefficient vector $\vdvecdef\in \C^q$.

\subsection{Control Objectives}
\label{sec:controlobjectives}

The main goal in all of the control problems is to achieve the convergence of $y(\cdot)$ to a $\persym$-periodic reference signal $\yref(\cdot)$ in the sense that
for all initial states of the system and the controller the integrals
\eq{
\int_{n\tau}^{(n+1)\tau} \norm{y(t)-\yref(t)}^2 dt 
}
decay to zero at a uniform exponential rate as $n\to \infty$.
This form of convergence differs from the pointwise convergence where $\norm{y(t)-\yref(t)}_{Y_0}\to 0$ at an exponential rate as $t\to\infty$, but we will see that it is a natural choice to use in connection with the lifting approach used in this paper.
The three main control problems are defined in the following.

\begin{ORPff}
  For given fixed signals $\yref(\cdot)$ and $\wdist(\cdot)$ choose a control input $u(\cdot)\in\Lploc[2](0,\infty;U_0)$ in such a way that for some $M,\ga>0$ we have
      \eq{
      \int_{n\tau}^{(n+1)\tau} \norm{y(t)-\yref(t)}^2 dt 
      \leq M^2 e^{-2\ga n}\left( \norm{x_0}^2   + 1 \right) 
      }
      for all $x_0\in X$ and $n\geq 0$.
\end{ORPff}

The next control objective considers output tracking and disturbance rejection using a feedback controller of the form~\eqref{eq:fbcontrintro}. In this control problem we can consider disturbance signals of the form~\eqref{eq:wdistform} with unknown coefficients $\set{v_k}_{k=1}^q\subset \C$.
In our context the \keyterm{exponential closed-loop stability} means that there exist $M_0,M_1,\ga_0,\ga_1>0$ such that in the case where $\yref(\cdot)\equiv 0$ and $\wdist(\cdot)\equiv 0$ we have
  \eq{
  \norm{x(t)} &\leq M_0e^{-\ga_0 t}(\norm{x_0}+\norm{z_0} ), \\
  \norm{z_n} &\leq M_1e^{-\ga_1 n}(\norm{x_0}+\norm{z_0} )
  }
  for all $t\geq 0$ and $n\geq 0$ and for all initial states $x_0\in X$ and $z_0\in Z$.

\begin{ORPfb}
  Choose the parameters $(G_1,G_2,K)$ of the dynamic feedback controller~\eqref{eq:fbcontrintro} in such a way that 
  \begin{itemize}
      \setlength{\itemsep}{1ex}
    \item[\textup{(1)}] The closed-loop system is exponentially stable.
    \item[\textup{(2)}]
       The output converges to the reference signal in the sense that for some $M,\ga>0$ and for all initial states $x_0\in X$ and $z_0\in Z$ 
and for all $\vdvecdef \in \C^q$
      \eq{
      \int_{n\tau}^{(n+1)\tau}
      \hspace{-1ex}
      \norm{y(t)-\yref(t)}^2 dt 
\leq M^2 e^{-2\ga n} \left( \norm{x_0}^2 + \norm{z_0}^2 +  \norm{\vdvec}^2 + 1 \right)
      }
      for all $n\geq 0$.
  \end{itemize}
\end{ORPfb}

Finally, in the robust output regulation problem it is in addition required that the error feedback controller tolerates perturbations and uncertainties in the parameters $\syspar$ of the system~\eqref{eq:plantintro}. The robustness of the controller also implies that the controller is capable of tracking \keyterm{any} $\persym$-periodic reference signal $\yrefpert(\cdot)\in\Lploc[2](0,\infty;Y_0)$ and rejecting any $\persym$-periodic disturbance signal $\wdistpert(\cdot)\in\Lploc[2](0,\infty;U_{d0})$

\begin{RORP}
  Choose  $(G_1,G_2,K)$ in the dynamic feedback controller~\eqref{eq:fbcontrintro} in such a way that 
  \begin{itemize}
      \setlength{\itemsep}{1ex}
    \item[\textup{(1)}] The closed-loop system is exponentially stable.
    \item[\textup{(2)}]
       The output converges to the reference signal in the sense that for some $M,\ga>0$ 
       and for all initial states $x_0\in X$ and $z_0\in Z$ 
and for all $\vdvecdef \in \C^q$
      \eq{
       \int_{n\tau}^{(n+1)\tau} \hspace{-1ex} \norm{y(t)-\yref(t)}^2 dt 
\leq M^2 e^{-2\ga n} \left( \norm{x_0}^2 + \norm{z_0}^2 +  \norm{\vdvec}^2 +1 \right)
      }
      for all $n\geq 0$.
    \item[\textup{(3)}] If $\yref(\cdot)$ and $\wdist(\cdot)$ are changed to $\persym$-periodic signals $\yrefpert(\cdot)$ and $\wdistpert(\cdot)$ and if
      the parameters $\syspar$ are perturbed to $\sysparpert$ in such a way that the exponential closed-loop stability is preserved, then the property~(2) continues to hold for some constants $M,\ga>0$.
  \end{itemize}
\end{RORP}

\section{Construction of The Controllers} 
\label{sec:contrdesign}

In this section we present our main results on the construction of controllers. The proofs of all the theorems are presented later in Section~\ref{sec:proofsmain}.

\subsection{Feedforward Output Regulation}
\label{sec:FFcontroller}

The following theorem presents a periodic control law that achieves output tracking of a given reference signal $\yref(\cdot)$ and rejects the known disturbance signal $\wdist(\cdot)$.

\begin{thm}
  \label{thm:ORPstatic}
  Assume the system is exponentially stable. If there exists $\FFfun(\cdot)\in \Lp[2](0,\persym;U_0)$ such that 
\eqn{
\label{eq:ORPstatic}
\Pop\FFfun=\yref-\Pdop\wdist,
}
then the $\persym$-periodic control law $u(\cdot)$ that is the periodic extension of $\FFfun(\cdot)$ from $[0,\persym]$ to $[0,\infty)$ solves the feedforward output regulation problem.

Conversely, if $\FFfun\in \Lploc[2](0,\infty;U_0)$ is a $\persym$-periodic control input such that $\norm{y(n\persym+\cdot)-\yref(\cdot)}_{\Lp[2](0,\persym)}\to 0$ as $n\to \infty$, then $\FFfun$ satisfies~\eqref{eq:ORPstatic} on $[0,\persym]$.
\end{thm}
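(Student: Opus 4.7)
The strategy is to compute the mild output $y(n\tau + s)$ explicitly under the $\tau$-periodic input $u_{reg}$ and disturbance $\wdist$, split it into a steady-state part that coincides with $(\Pop u_{reg}+\Pdop\wdist)(s)$ and a transient that decays like $\|\FM_A(\persym,0)^n\|$. Both directions then follow.

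First I would expand the mild state at time $n\persym+s$ with $s\in[0,\persym]$ by breaking the integral into the $n$ period pieces $\int_{k\persym}^{(k+1)\persym}$, $k=0,\dots,n-1$, plus the tail $\int_{n\persym}^{n\persym+s}$. On each period piece, I change variables $r=k\persym+\sigma$ and use $\persym$-periodicity of $B(\cdot)$, $B_d(\cdot)$, $u_{reg}$, $\wdist$. The evolution-family identities $\FM_A(t+\persym,r+\persym)=\FM_A(t,r)$ and the cocycle property then give the crucial factorization
\eq{
\FM_A(n\persym+s,k\persym+\sigma) = \FM_A(s,0)\,\FM_A(\persym,0)^{\,n-k-1}\,\FM_A(\persym,\sigma),
}
valid for $0\le k\le n-1$ and $\sigma\in[0,\persym]$. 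Summing the geometric operator series $\sum_{j=0}^{n-1}\FM_A(\persym,0)^{j}=(I-\FM_A(\persym,0))^{-1}(I-\FM_A(\persym,0)^{n})$ (whose partial sums converge to $(I-\FM_A(\persym,0))^{-1}$ exponentially by Lemma~\ref{lem:UAexpstabchar}) and recognizing the definition of $\KA$, the state decomposes as $x(n\persym+s)=x_{ss}(s)+e_n(s)$, where $x_{ss}$ is the integral formula defining the steady state and $\|e_n(s)\|\le M e^{-\omega n\persym}(\|x_0\|+\|u_{reg}\|_{U}+\|\wdist\|_{U_d})$ uniformly in $s\in[0,\persym]$, using only exponential stability and boundedness of $B(\cdot)$, $B_d(\cdot)$.

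Next I would apply $C(n\persym+s)=C(s)$ and add $D(s)u_{reg}(s)$ to obtain $y(n\persym+s)=y_{ss}(s)+C(s)e_n(s)$. Comparing $y_{ss}$ against the definitions of $\Pop$ and $\Pdop$ stated in the introduction, one sees term-for-term that
\eq{
y_{ss}(\cdot) = \Pop u_{reg}(\cdot)+\Pdop\wdist(\cdot)\quad\text{on }[0,\persym].
}
Combining this with~\eqref{eq:ORPstatic}, the steady-state part is exactly $\yref$, so the first assertion of the theorem reduces to the transient bound
\eq{
\int_{n\persym}^{(n+1)\persym}\|y(t)-\yref(t)\|^2\,dt = \int_0^{\persym}\|C(s)e_n(s)\|^2 ds \le M^2 e^{-2\alpha n}(\|x_0\|^2+1),
}
which follows from $\|C(\cdot)\|_{L^\infty}<\infty$ and the estimate on $e_n$, after absorbing the fixed norms $\|u_{reg}\|_U$ and $\|\wdist\|_{U_d}$ into the additive constant.

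For the converse, the same computation applies to any $\persym$-periodic $u_{reg}\in\Lploc[2](0,\infty;U_0)$ and shows $\|y(n\persym+\cdot)-(\Pop u_{reg}+\Pdop\wdist)(\cdot)\|_{\Lp[2](0,\persym)}\to 0$. Combined with the hypothesis $\|y(n\persym+\cdot)-\yref\|_{\Lp[2](0,\persym)}\to 0$ and uniqueness of limits in $\Lp[2](0,\persym;Y_0)$, we conclude $\Pop u_{reg}+\Pdop\wdist=\yref$, i.e.~\eqref{eq:ORPstatic}. The main obstacle is the bookkeeping of evolution-family identities in the first step; once the geometric-series factorization is in place and the limit is identified with the definition of $\KA$, both directions are immediate.
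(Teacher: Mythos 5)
Your proof is correct, and the mathematics underneath is the same as the paper's; the difference is one of packaging. The paper first passes to the lifted discrete-time system of Section~\ref{sec:liftedsys}, invokes the standard fact that the output of an exponentially stable discrete-time system under constant inputs converges at an exponential rate to $\Ltedop{P}(1)\FFfun+\Ltedop{P}_d(1)\wdist$, and then uses the identifications $\Pop=\Ltedop{P}(1)$, $\Pdop=\Ltedop{P}_d(1)$. You instead unroll that argument on the continuous-time mild solution: your factorization $\FM_A(n\persym+s,k\persym+\sigma)=\FM_A(s,0)\FM_A(\persym,0)^{n-k-1}\FM_A(\persym,\sigma)$ together with the geometric operator series is precisely the proof of that ``well-known'' discrete-time steady-state result, specialized to the lifted operators, and your term-by-term comparison of $y_{ss}$ with the formulas for $\Pop$ and $\Pdop$ is exactly the verification of $\Pop=\Ltedop{P}(1)$ that the paper asserts in Section~\ref{sec:liftedsys} without computation. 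What your route buys is self-containedness and an explicit, uniform-in-$s$ transient bound $\norm{e_n(s)}\leq Me^{-\ga n}(\norm{x_0}+\norm{\FFfun}_U+\norm{\wdist}_{U_d})$ from which the required estimate $M^2e^{-2\ga n}(\norm{x_0}^2+1)$ follows after absorbing the fixed input norms; what the paper's route buys is reusability, since the same lifting argument is then applied verbatim in the proofs of Theorems~\ref{thm:ORPfeedback}--\ref{thm:RORPapprox}. Both directions of your argument, including the appeal to uniqueness of limits in $\Lp[2](0,\persym;Y_0)$ for the converse, are sound.
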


If instead of a single disturbance signal $\wdist(\cdot)$ we want to reject signals of the form~\eqref{eq:wdistform} with $\set{v_k}_{k=1}^q$, we can achieve this by finding $\set{\FFfun^k}_{k=0}^q\subset  U$ such that
\eq{
\Pop\FFfun^0=\yref(\cdot) 
\qquad \mbox{and}\qquad
\Pop\FFfun^k=\Pdop \wdist^k(\cdot), \quad 
1\leq k\leq q.
}
The linearity of the operators $\Pop$ and $\Pdop$ then implies that for all $\set{v_k}_{k=1}^q$ the function $\FFfun(\cdot)$ in Theorem~\ref{thm:ORPstatic} is given by $\FFfun = \FFfun^0 - \sum_{k=1}^q v_k\FFfun^k$.

\begin{rem}
  \label{rem:FFstrongstab}
  It follows from the proof of Theorem~\ref{thm:ORPstatic} that the same control law also solves the output regulation problem in the case where the system~\eqref{eq:plantintro} is only \keyterm{strongly stable}, meaning that $\norm{U_A(t,0)x_0}\to 0$ as $t\to \infty$ for all $x_0\in X$, and $1\in \rho(U_A(\persym,0))$. In this situation the output $y(t)$ converges to the reference signal in the sense that for all initial states $x_0\in X$ 
  \eq{
      \int_{n\tau}^{(n+1)\tau} \norm{y(t)-\yref(t)}^2 dt \to 0
      \qquad \mbox{as} \quad n\to\infty.
  }
\end{rem}

The form of the operator $\Pop$ implies that finding the solution $\FFfun$ of the equation $\Pop\FFfun = \yref-\Pdop\wdist$ in Theorem~\ref{thm:ORPstatic} is equivalent to finding the solution
$\IEQsol(\cdot)\in U$ of the Volterra--Fredholm integral equation
\eq{
\yref(t) -(\Pdop\wdist)(t)
= D(t) \IEQsol(t) + \int_0^\persym K_F(t,s)\IEQsol(s)ds  + \int_0^t K_V(t,s)\IEQsol(s)ds
}
with kernels
\begin{subequations}
  \label{eq:IEQkernels}
  \eqn{
  \hspace{-1.5ex}  K_F(t,s) &= C(t)\FM_A(t,0)(I-\FM_A(\persym,0))\inv \FM_A(\persym,s)B(s) \\
  \hspace{-1.5ex}K_V(t,s) &= C(t)\FM_A(t,s)B(s).
  }
\end{subequations}
Alternatively, the solution $\FFfun$ of $\Pop\FFfun = \yref-\Pdop\wdist$ can be approximated based on measurements from the periodic system~\eqref{eq:plantintro} using the procedure introduced in Section~\ref{sec:P1measurement}.

The periodic control law $u(\cdot)$ in Theorem~\ref{thm:ORPstatic} can also be characterized as part of the solution of the \keyterm{periodic regulator equations} of the form studied in~\cite{ZheSer06,PauPoh12b}. This connection is described in detail in Section~\ref{sec:PerRegEqns}.

Finally, if~\eqref{eq:plantintro} is a time-invariant system $(A,B,B_d,C,D)$, and if $T(t) $ is the semigroup generated by $A$, then 
  \eq{
  \KA(t,s) &=
  (I-T(\persym))\inv T(t-s+\persym)
}
  and 
the operator $\Pop$ and $\Pdop$ simplify so that
\eq{
&\Pop u =  C\hspace{-.8ex}\int_0^\persym \hspace{-1ex} \KA(\cdot,s)Bu(s)ds + C\hspace{-.8ex}\int_0^\cdot\hspace{-.9ex} T(\cdot-s)Bu(s)ds + D u(\cdot) \\
&\Pdop w = C\hspace{-.8ex}\int_0^\persym \hspace{-1ex}\KA(\cdot,s)B_dw(s)ds + C\hspace{-.8ex}\int_0^\cdot \hspace{-1ex}T(\cdot-s)B_dw(s)ds.
}
For time-invariant systems and for $\tau$-periodic reference and disturbance signals $\yref(\cdot)$ and $\wdist(\cdot)$ it is possible to solve the output regulation problem by solving the regulator equations associated to an infinite-dimensional autonomous exosystem~\cite[Thm. 3.1]{ImmPoh06b}. Also in this case the resulting control input $u(\cdot)$ is $\tau$-periodic, and thus by Theorem~\ref{thm:ORPstatic} it is of the form of the periodic feedforward control law considered in this section in the sense that
it satisfies~\eqref{eq:ORPstatic} on $[0,\persym]$.

\subsection{Error Feedback Output Regulation}
\label{sec:FBcontroller}

In this section we consider reference and disturbance signals of the form~\eqref{eq:wdistform} with unknown coefficients $\set{v_k}_{k=1}^q\subset \C$. Similarly as explained in Section~\ref{sec:FFcontroller} the functions $\FFfun^k$ can be solved from Volterra--Fredholm integral equations with kernels~\eqref{eq:IEQkernels}, or approximated using measurements from the system as shown in Section~\ref{sec:P1measurement}.  
\begin{thm}
  \label{thm:ORPfeedback}
  Assume the system is exponentially stable and the reference and disturbance signals are of the form~\eqref{eq:wdistform} where either $\yref\not\equiv0$ or $\wdist^k\not\equiv 0$ for some $k\in \List{q}$. Assume further that there exist $\FFfun^k(\cdot)\in \Lp[2](0,\persym;U_0)$ for $k\in \List[0]{q}$ such that
\eq{
\Pop\FFfun^0=\yref(\cdot) 
\qquad \mbox{and}\qquad
\Pop\FFfun^k=\Pdop \wdist^k(\cdot), \quad 
1\leq k\leq q
}
and let
$ \set{k_1,\ldots,k_r}\subset \List[0]{q}$
be a set of indices such that 
$\set{\FFfun^{k_j}}_{j=1}^r\subset U$ is a basis of the subspace $\Span \set{\FFfun^0,\FFfun^1,\ldots,\FFfun^q}$. 
Choose $Z=\C^r$, let $Q\in \C^{r\times r}$ be invertible and choose $G_1\in \C^{r\times r}$, $G_2\in \Lin(Y,\C^r)$, $K\in \Lin(\C^r,U)$ such that
\eq{
G_1=I, \quad G_2 = -(\Pop K_0Q)^\ast, \quad K = \eps K_0Q, \quad \mbox{and} \quad K_0z=\sum_{j=1}^r z_j\FFfun^{k_j}
}
for all $z=(z_1,\ldots,z_r)^T\in \C^r$. Then there exists $\eps^\ast>0$ such that for every $0<\eps\leq \eps^\ast$ the $r$-dimensional controller~\eqref{eq:fbcontrintro} solves the error feedback output regulation problem.
\end{thm}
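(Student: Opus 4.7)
The plan is to lift the continuous-time periodic plant to a discrete-time system, analyze the resulting closed loop as a discrete-time low-gain integral controller, and translate the conclusions back to continuous time. First I set $x_n = x(n\persym)$, $u_n = u(n\persym + \cdot) \in U$, $y_n = y(n\persym + \cdot) \in Y$, and $w_n = \wdist(n\persym + \cdot) \in U_d$, obtaining a lifted system $x_{n+1} = A_L x_n + B_L u_n + B_{d,L} w_n$, $y_n = C_L x_n + D_L u_n + D_{d,L} w_n$ with $A_L = \FM_A(\persym,0)$ and $B_L, C_L, D_L, B_{d,L}, D_{d,L}$ the associated bounded integral operators. Lemma~\ref{lem:UAexpstabchar} gives $1 \in \rho(A_L)$, and a direct computation using the defining formula for $\KA$ verifies $D_L + C_L(I - A_L)^{-1} B_L = \Pop$, and analogously the disturbance transfer at $\gl = 1$ equals $\Pdop$.

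In the lifted framework the error feedback controller is a discrete-time integrator $z_{n+1} = z_n + G_2 e_n$ with $u_n = \eps K_0 Q z_n$ and $e_n = y_n - \yref$, where $\yref$ is viewed as a constant sequence in $Y$. The closed-loop operator on $X \oplus \C^r$ is
\eq{
A_e(\eps) = \pmat{A_L & \eps B_L K_0 Q \\ G_2 C_L & I + \eps G_2 D_L K_0 Q},
}
and $\gs(A_e(0)) = \gs(A_L) \cup \set{1}$ with the eigenvalue $1$ of multiplicity $r$. Since $1 \in \rho(A_L)$, I would apply a standard low-gain singular perturbation argument: conjugating by $T_\eps = \pmat{I & \eps (I - A_L)^{-1} B_L K_0 Q \\ 0 & I}$ triangularizes $A_e(\eps)$ up to an $O(\eps^2)$ residual, producing diagonal blocks $A_L + O(\eps)$ and $I + \eps G_2 \Pop K_0 Q + O(\eps^2) = I - \eps (\Pop K_0 Q)^\ast (\Pop K_0 Q) + O(\eps^2)$ after substituting $G_2 = -(\Pop K_0 Q)^\ast$. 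The matrix $(\Pop K_0 Q)^\ast (\Pop K_0 Q)$ is positive definite provided $\Pop K_0 Q$ is injective, which follows from invertibility of $Q$, injectivity of $K_0$ on $\C^r$ (the $\FFfun^{k_j}$ form a basis), and injectivity of $\Pop$ on $\Span\set{\FFfun^{k_j}}$. Hence for all sufficiently small $\eps^\ast > 0$ and every $\eps \in (0, \eps^\ast]$ both reduced blocks are power-stable, and $A_e(\eps)$ is exponentially stable.

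Finally, I extract the regulation rate. Exponential stability with the constant lifted forcing produces a unique equilibrium $(x_\infty, z_\infty)$; the equilibrium of the $z$-equation gives $(\Pop K_0 Q)^\ast (y_\infty - \yref) = 0$, so $y_\infty - \yref \in \ran(\Pop K_0 Q)^\perp$, while the equilibrium of the $x$-equation together with the identification of $\Pop, \Pdop$ as lifted transfer operators gives $y_\infty = \Pop(\eps K_0 Q z_\infty) + \Pdop \wdist$. Writing $\yref - \Pdop \wdist = \Pop \FFfun$ with $\FFfun = \FFfun^0 - \sum_{k=1}^q v_k \FFfun^k \in \Span\set{\FFfun^{k_j}}$, this rewrites as $y_\infty - \yref = \Pop(\eps K_0 Q z_\infty - \FFfun) \in \ran(\Pop K_0 Q)$, which combined with the orthogonality forces $y_\infty = \yref$. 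The error $e_n = C_L(x_n - x_\infty) + \eps D_L K_0 Q (z_n - z_\infty)$ then decays at the closed-loop exponential rate, and undoing the lifting, which is an isometry between $\Lp[2](n\persym, (n+1)\persym; Y_0)$ and $Y$, yields the bound required in part (2). The main obstacle I anticipate is controlling the block-diagonalization and the $O(\eps^2)$ remainders uniformly in operator norm on the infinite-dimensional Hilbert space $X$, since $(I - A_L)^{-1}$ is only bounded (by the spectral gap) and not compact in general; a secondary subtlety is that injectivity of $\Pop$ on $\Span\set{\FFfun^{k_j}}$ is not obviously implied by the basis hypothesis, and must either be treated as a nondegeneracy condition implicit in the choice of indices $\set{k_j}$ or handled by refining the basis so that its $\Pop$-images remain linearly independent.
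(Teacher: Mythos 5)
Your argument follows essentially the same route as the paper: lift the periodic plant to an autonomous discrete-time system, recognize the controller as a low-gain discrete-time integrator, triangularize the closed-loop operator with the similarity $T_\eps$ (the paper performs exactly this conjugation, with $H=-R(1,\Ltedop{A})\Ltedop{B}K_0Q$, inside the proof of Theorem~\ref{thm:ORPsimplecontr}), and use positivity of $(\Pop K_0Q)^\ast(\Pop K_0Q)$ to push the slow block strictly inside the unit disc; your equilibrium/orthogonality computation showing $y_\infty=\yref$ is a self-contained substitute for the paper's detour through the discrete-time regulator equations (Theorems~\ref{thm:ORPchar} and~\ref{thm:ORPsimplecontr} with a $(q+1)$-dimensional exosystem $S=I$), and both are correct. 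Two footnotes. First, the injectivity issue you flag is real and not merely a technicality you failed to discharge: the hypothesis that $\set{\FFfun^{k_j}}_{j=1}^r$ is a basis of $\Span\set{\FFfun^0,\ldots,\FFfun^q}$ does not by itself make $\set{\Pop\FFfun^{k_j}}_{j=1}^r$ linearly independent unless $\Pop$ is injective on that span; the paper's discrete-time result instead assumes a \emph{minimal} set of vectors whose $P(1)$-images span the relevant subspace, which is precisely the nondegeneracy needed, so the correct reading is that the indices $k_j$ must be chosen so that $\set{\yref}\cup\set{\Pdop\wdist^{k_j}}$ is linearly independent in $Y$. Second, a genuine (though minor) omission: the error feedback output regulation problem also demands part~(1), exponential closed-loop stability in the continuous-time sense, i.e.\ a bound on $\norm{x(t)}$ for \emph{all} $t\geq 0$ rather than only at the sampling instants $t=n\persym$; this requires the interpolation step of Lemma~\ref{lem:CLstab}, namely writing $x(n\persym+t_0)=U_A(t_0,0)x(n\persym)+\int_0^{t_0}U_A(t_0,s)B(s)(Kz_n)(s)\,ds$ and using the uniform bounds on $U_A$ and $K$ over one period, which your writeup does not address.
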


Denote $\ydk[k](\cdot) = \Pdop\wdist^k$ for $k\in \List{q}$.
If $\yref(\cdot)\not\equiv 0$, we can choose an index set $ \set{0,k_2,\ldots,k_r} $ and 
\eq{
\Pop K_0 z
= \sum_{k=1}^r z_j \Pop\FFfun^{k_j} 
= z_1\yref + \sum_{j=2}^r z_j \ydk[k_j]
}
for all $z=(z_1,\ldots,z_r)^T\in \C^r$. Then the operator $G_2\in \Lin(Y,\C^r)$ is such that 
\eq{
G_2y
= -Q^\ast \pmat{\iprod{y}{\yref}_Y\\ \iprod{y}{\ydk[k_2]}_Y\\\vdots \\\iprod{y}{\ydk[k_r]}_Y}
\qquad
\qquad \forall
y\in \Lp[2](0,\persym;Y_0)
}
where $\iprod{f}{g}_Y=\int_0^\persym \iprod{f(t) }{g(t)}_{Y_0}dt$ for $f,g\in Y=\Lp[2](0,\persym;Y_0)$. On the other hand, if $\yref(\cdot)\equiv 0$, then we can choose an index set $ \set{k_1,\ldots,k_r}$ with $k_1\geq 1$ and then 
$G_2 y = -Q^\ast (\iprod{y}{\ydk[k_1]}_Y,\ldots,\iprod{y}{\ydk[k_r]}_Y)^T $ 
for all $y\in \Lp[2](0,\persym;Y_0)$.

The invertible matrix $Q\in \C^{r\times r}$ can be used to improve the stability of the closed-loop system. One possible choice of $Q$ is illustrated in the example in Section~\ref{sec:HarmOsc}. By Remark~\ref{rem:ORPsimpleALTG2} it would also be possible to choose any $G_2\in \Lin(Y,\C^r)$ such that $\gs(G_2\Pop K_0) \subset \C_-$. However, for our controller the choice $G_2 = -(\Pop K_0 Q)^\ast$ has a particularly simple structure.

We will see later that the exponent in the rate of decay of the regulation error is determined by the stability margin of the closed-loop system consisting of the lifted periodic system and the discrete-time controller. If available, this information can be used to choose a suitable value of the parameter $\eps>0$. As explained in Remark~\ref{rem:Oscepschoice}, for a finite-dimensional $X$ the closed-loop system operator and its spectrum can be approximated numerically by simulating the controlled system and recording the values of $x(\persym)$ and $z_1$ for when the initial state vectors $(x_0,z_0)^T$ are the Euclidean basis vectors of $X\times Z$. If $X$ is infinite-dimensional, the same procedure can be used for finite-dimensional approximations of the original system.

\subsection{Robust Output Regulation}
\label{sec:RORPcontroller}

In this section we present controllers for output tracking and disturbance rejection with the additional robustness requirement. The first controller presented in Theorem~\ref{thm:RORPcontroller} is a discrete-time feedback controller on an infinite-dimensional state-space. In fact, the internal model principle in Theorem~\ref{thm:IMP} will imply that robustness can not be achieved with a finite-dimensional autonomous feedback controller. However, Theorem~\ref{thm:RORPapprox} demonstrates that a finite-dimensional discrete-time feedback controller can be designed to achieve \keyterm{approximate} output tracking and disturbance rejection and robustness with respect to perturbations in the system.

\begin{thm}
  \label{thm:RORPcontroller}
  Assume the system is exponentially stable
and $\Pop\in \Lin(U,Y)$ is surjective.
Choose
\eq{
Z=Y = \Lp[2](0,\persym;Y_0), \qquad G_1 = I \in \Lin(Z),
}
let $G_2\in \Lin(Y,Z)$ be boundedly invertible
and let $K = \eps K_0$ where $K_0\in \Lin(Y,U)$ is such that $\gs(G_2\Pop K_0)\subset \C_-$.
Then there exists $\eps^\ast>0$ such that for every $0<\eps\leq \eps^\ast$ the infinite-dimensional controller~\eqref{eq:fbcontrintro}
solves the robust output regulation problem.
\end{thm}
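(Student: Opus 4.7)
My plan is to invoke the \emph{lifting technique} mentioned in the introduction. Sampling the periodic system at the times $n\persym$ converts~\eqref{eq:plantintro} into a time-invariant discrete-time system with state $x_n\in X$, lifted input $u_n=u(n\persym+\cdot)\in U$, lifted output $y_n=y(n\persym+\cdot)\in Y$, and transition operator $A_L=\FM_A(\persym,0)$ on $X$. A direct computation will identify the lifted steady-state input-to-output map $D_L+C_L(I-A_L)\inv B_L$ with $\Pop$, and Lemma~\ref{lem:UAexpstabchar} places $\gs(A_L)$ strictly inside the open unit disk. The controller~\eqref{eq:fbcontrintro} is already in discrete time, so the closed loop on $X\times Z$ is governed by the time-invariant operator
\eq{
\Ltedop{A}_{cl}(\eps)=\pmat{A_L & \eps B_L K_0\\ G_2 C_L & I+\eps G_2 D_L K_0}.
}
The core of the proof is to show that $\Ltedop{A}_{cl}(\eps)$ is power-stable for all sufficiently small $\eps>0$, and to then read off the regulation and robustness properties from the structure $G_1=I$ and the bounded invertibility of $G_2$.

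For the closed-loop stability I would combine an upper-semicontinuity argument for $\gs(A_L)$ with a Schur complement analysis near $\gl=1$. At $\eps=0$ the operator is block triangular with spectrum $\gs(A_L)\cup\set{1}$, so the first part remains strictly inside the open unit disk for small $\eps$. For $\gl$ near $1$, invertibility of $\gl I-A_L$ lets me reduce $\gl I-\Ltedop{A}_{cl}(\eps)$ to its Schur complement on $Z$. Writing $\gl=1+\mu$ and expanding to leading order gives
\eq{
S(1+\mu,\eps)=\mu I-\eps G_2\bigl[D_L+C_L(I-A_L)\inv B_L\bigr]K_0+O(\mu\eps)=\mu I-\eps G_2\Pop K_0+O(\mu\eps).
}
Because $G_2\Pop K_0$ is bounded, $\gs(G_2\Pop K_0)\subset \C_-$ is compact, and there exists $\gd>0$ with $\re\gs(G_2\Pop K_0)\leq -\gd$. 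For small $\eps>0$ the spectrum of the Schur complement is confined to $\set{\mu:\re\mu\leq -\eps\gd/2}$, giving $\abs{1+\mu}<1$. Hence $\gs(\Ltedop{A}_{cl}(\eps))$ has a uniform gap to the unit circle, which is equivalent to closed-loop exponential stability. Converting this back from the lifted norms to the integrals $\int_{n\persym}^{(n+1)\persym}\norm{y(t)-\yref(t)}^2 dt$ is routine.

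Next I verify the regulation property~(2). Exponential closed-loop stability implies that for any $\persym$-periodic forcing $(\yref,\wdist)$ the closed-loop equations admit a unique bounded steady-state solution $(x_\infty,z_\infty)$ and every trajectory converges to it at the closed-loop rate. At the steady state the controller equation $z_{n+1}=z_n+G_2(y_n-\yref)$ reduces to $G_2(y_\infty-\yref)=0$, and bounded invertibility of $G_2$ forces $y_\infty=\yref$ in $Y$. The dependence on $\norm{\vdvec}$ in the error estimate enters through $\norm{\wdist}\leq C\norm{\vdvec}$ and linearity of the system.

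Finally, the robustness statement~(3) is settled by the same steady-state argument applied to the perturbed lifted data $(\tilde{A}_L,\tilde{B}_L,\tilde{C}_L,\tilde{D}_L)$ and the perturbed I/O operator $\tilde{\Pop}$. Since the controller $(G_1,G_2,K)$ is held fixed and the hypothesis assumes that exponential closed-loop stability is preserved, the perturbed closed loop still has a unique bounded periodic orbit. At that orbit the identity $G_2(\tilde{y}_\infty-\yrefpert)=0$ holds, and invertibility of $G_2$ combined with the \emph{infinite-dimensional internal model} $G_1=I$ on $Z=Y$ yields $\tilde{y}_\infty=\yrefpert$ for every $\persym$-periodic $\yrefpert$ and $\wdistpert$. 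This is precisely why $Z$ is forced to be infinite-dimensional. The principal technical obstacle is the Schur complement step in the second paragraph: the cluster of infinitely many eigenvalues at $\gl=1$ must be dispersed uniformly into the open unit disk, and this uniformity rests on the boundedness of $G_2\Pop K_0$ together with the compactness of its spectrum in $\C_-$.
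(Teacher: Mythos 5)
Your argument is correct and follows the same overall architecture as the paper: lift the $\persym$-periodic system to an autonomous discrete-time system with $\Pop=\Ltedop{P}(1)$, stabilize the loop by a low-gain argument, and extract regulation and robustness from $G_1=I$ together with the bounded invertibility of $G_2$. Where you differ is in how the ingredients are established. For closed-loop stability the paper does not work with the raw block operator: it conjugates $A_e$ by a bounded similarity (as in the proof of Theorem~\ref{thm:ORPsimplecontr}, following \cite{Pau16a}) into a form $A_0(\eps)+\Delta(\eps)$ whose diagonal blocks are $A+\eps HG_2C$ and $I+\eps G_2\Pop K_0$, and then studies $\Delta(\eps)R(\mu,A_0(\eps))$ for $\abs{\mu}\geq 1$; you instead take a Schur complement of $\gl I-A_e$ near $\gl=1$. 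The two routes are essentially equivalent in technical content, and both ultimately rest on the same implicit uniform resolvent estimate $\sup_{\re z\geq-\gd/2}\norm{R(z,G_2\Pop K_0)}<\infty$ needed to absorb the $O(\mu\eps)$ correction and to show that the spectrum near $1$ retreats into the disk by a margin proportional to $\eps$; your sketch leaves this at the same level of detail as the paper's. For parts (2) and (3) the paper routes the argument through the discrete-time regulator equations (Theorem~\ref{thm:ORPchar}) and the internal model principle (Theorem~\ref{thm:IMP}, applied via Theorem~\ref{thm:RORPsimplecontr}), whereas you argue directly: the unique fixed point of the stable closed loop under constant lifted forcing must satisfy $G_2(y_\infty-\yref)=0$, and injectivity of $G_2$ kills the asymptotic error, for the nominal and for any stability-preserving perturbed data alike. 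This direct fixed-point argument is exactly the sufficiency half of the internal model principle specialized to $S=1$, so nothing is lost; what the paper's citation-based route buys is that the conditions~\eqref{eq:Gconds} are also shown to be \emph{necessary} for robustness, which is what forces $Z$ to be infinite-dimensional --- a fact you assert correctly but do not need for this theorem. Finally, you should, as the paper does via Lemma~\ref{lem:CLstab}, note explicitly that power stability of the lifted closed loop yields the continuous-time decay $\norm{x(t)}\leq M_0e^{-\ga_0 t}(\norm{x_0}+\norm{z_0})$ required by item (1) of the problem statement; you call this step routine, and it is, but it does use the $\Lp[\infty]$ bounds on $B(\cdot)$ and the boundedness of $K$ on the intermediate interval $[n\persym,(n+1)\persym)$.
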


Later in Section~\ref{sec:proofsmain} we will see that the assumption on the surjectivity of $\Pop$ is necessary for robustness due to the requirement that the transfer function of the lifted system must be surjective at the frequency $\mu =1$. For time-invariant systems this is a well-known condition, but for periodic systems it becomes fairly restrictive and can mainly be achieved in the situation where $D(t)$ are boundedly intertible for all $t\in [0,\persym]$. If the surjectivity assumption is satisfied, one possible choice for the stabilizing operator $K_0$ is $K_0=-(G_2\Pop)\pinv$, where $(G_2\Pop)\pinv$ is the Moore--Penrose pseudoinverse of $G_2\Pop$.

The following theorem introduces a simple finite-dimensional controller that solves the robust output regulation problem approximately in the sense that $\norm{y(n\persym+\cdot)-\yref(\cdot)}_{\Lp[2]}$ becomes small as $n\to \infty$. The asymptotic error bound presented in the theorem depends on the system, the reference and disturbance signals, as well as on the space $Y_N$ used in the construction.

\begin{thm}
\label{thm:RORPapprox}
Assume the periodic system is exponentially stable and let $Y_N$ be a finite-dimensional subspace of $Y$ such that $Y_N\subset \ran(\Pop)$. Choose $Z=\C^r$ with $r=\dim Y_N$,
\eq{
G_1=I\in \C^{r\times r}, \quad  K = \eps K_0, \quad \mbox{and} \quad G_2=G_{20}\YP\in \Lin(Y,Z)
}
  where $\YP$ is a projection onto $Y_N$, $G_{20}\in \Lin(Y_N,Z)$ is boundedly invertible, and $K_0\in \Lin(\C^r,U)$ is such that $\gs(G_2 \Pop K_0 )\subset \C_-$. Then there exists $\eps^\ast >0$ such that for all $0<\eps\leq \eps^\ast$ the controller~\eqref{eq:fbcontrintro} solves the output regulation problem approximately for all $\persym$-periodic reference and disturbance signals $\yref\in Y$ and $\wdist\in U_d$. Asymptotically the regulation error on the interval $[n\persym,(n+1)\persym)$ satisfies an estimate
\eq{
\norm{y(n\persym+\cdot)-\yref(\cdot)}_Y
\approx \norm{(I-\YP)\hspace{-.3ex}\left[ \Pop Kz + \Pdop\wdist-\yref \right] } 
}
where $z\in Z$ is the unique solution of $\YP\Pop Kz = \YP\yref - \YP\Pdop\wdist$.

Moreover, the controller is robust with respect to perturbations for which the perturbed closed-loop system is exponentially stable. If $1\in \rho(U_{\tilde{A}}(\persym,0))$ for the perturbed system, then the asymptotic error is of the form above for perturbed parameters of the system.
\end{thm}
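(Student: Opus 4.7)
The plan is to apply the lifting technique (Section~\ref{sec:liftedsys}) to represent the closed-loop system as a discrete-time time-invariant system on the product space $X\times Z$, and then to combine a low-gain spectral perturbation argument for stability with a direct fixed-point computation for the asymptotic regulation error. Let $A_L=\FM_A(\persym,0)$ and let $B_L$, $C_L$, $D_L$, $B_{dL}$ denote the lifted input, output, feedthrough, and disturbance operators, which by construction satisfy that the transfer function of the lifted open-loop system evaluated at $\mu=1$ is $\Pop$ (and $\Pdop$ on the disturbance channel). With the controller choices $G_1=I$, $K=\eps K_0$, $G_2=G_{20}\YP$, the autonomous part of the closed-loop system has the block form
\eq{
L_\eps = \pmat{A_L & \eps B_L K_0 \\ G_2 C_L & I + \eps G_2 D_L K_0},
}
which at $\eps=0$ reduces to a block lower triangular operator.

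For exponential closed-loop stability I would first invoke Lemma~\ref{lem:UAexpstabchar} to conclude that $\sigma(A_L)$ lies strictly inside the open unit disk, so $\sigma(L_0)=\sigma(A_L)\cup\{1\}$, and the only spectrum of $L_0$ on the unit circle is the eigenvalue $\mu=1$, whose spectral subspace is $\{0\}\times\C^r$ and therefore has finite dimension $r$. Standard finite-dimensional spectral perturbation theory then applies to the isolated eigenvalue $1$: the $r$ perturbed eigenvalues expand as $\mu_j(\eps)=1+\eps\lambda_j+o(\eps)$, and a short computation with the associated spectral projection identifies $\{\lambda_j\}_{j=1}^r=\sigma(G_2\Pop K_0)$, since $\Pop$ is precisely the transfer function of the lifted plant at $\mu=1$. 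By hypothesis $\sigma(G_2\Pop K_0)\subset\C_-$, so there exists $\eps^\ast>0$ with $|\mu_j(\eps)|<1$ for all $0<\eps\leq\eps^\ast$, and hence $L_\eps$ is power stable, which translates back to exponential stability of the continuous-time closed-loop evolution. This spectral perturbation step, and in particular the justification that the leading-order correction is $\eps\,\sigma(G_2\Pop K_0)$ in the present infinite-dimensional setting, is the step I expect to require the most care.

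For the asymptotic estimate I would use that power stability of $L_\eps$ together with the constant-in-$n$ nature of the lifted external forcing implies that the lifted state $(x_n,z_n)$ converges geometrically to the unique fixed point $(x_\infty,z_\infty)$, and the corresponding steady-state output is $y_\infty=\Pop K z_\infty+\Pdop\wdist$. The $z$-equation at the fixed point,
\eq{
z_\infty=z_\infty+G_2(y_\infty-\yref),
}
forces $G_2(y_\infty-\yref)=0$; since $G_2=G_{20}\YP$ with $G_{20}$ boundedly invertible on $Y_N$, this is equivalent to $\YP(y_\infty-\yref)=0$, i.e.,
\eq{
\YP\Pop K z_\infty=\YP\yref-\YP\Pdop\wdist,
}
which is the reduced equation stated in the theorem. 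Uniqueness of $z_\infty$ on $\C^r$ follows from invertibility of $\YP\Pop K_0$, a consequence of $0\notin\sigma(G_2\Pop K_0)$. The asymptotic regulation error is then $y_\infty-\yref=(I-\YP)(\Pop K z_\infty+\Pdop\wdist-\yref)$, because its $\YP$-component vanishes by construction, yielding the estimate in the theorem.

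Finally, for robustness, whenever the perturbed closed-loop operator $\tilde L_\eps$ remains power stable the same fixed-point argument applies verbatim with $\Pop,\Pdop$ replaced by their perturbed counterparts $\tilde{\Pop},\tilde{\Pdop}$. These operators are well-defined precisely under the stated assumption $1\in\rho(U_{\tilde A}(\persym,0))$, which guarantees that $(I-\FM_{\tilde A}(\persym,0))\inv$ exists. Since the controller parameters are unchanged and the reduced equation $\YP\tilde{\Pop}K z=\YP\yref-\YP\tilde{\Pdop}\wdist$ is a finite linear equation on $\C^r$ whose coefficient matrix depends continuously on the perturbation, invertibility of $\YP\tilde{\Pop}K_0$ is preserved for small perturbations, and the perturbed asymptotic error inherits the same closed form $(I-\YP)[\tilde{\Pop}K z+\tilde{\Pdop}\wdist-\yref]$.
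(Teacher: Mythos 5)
Your proposal is correct, and its second half --- computing the asymptotic error via the fixed point of the lifted closed-loop iteration, reducing $G_2(y_\infty-\yref)=0$ to $\YP\Pop Kz=\YP\yref-\YP\Pdop\wdist$ through the injectivity of $G_{20}$, and identifying the residual error with $(I-\YP)[\Pop Kz+\Pdop\wdist-\yref]$ --- is essentially the paper's own argument, which phrases the identical computation as the evaluation of the closed-loop transfer function $P_e(1)=C_eR(1,A_e)B_e+D_e$. Where you genuinely diverge is the stability step. The paper (via the proof of Theorem~\ref{thm:ORPsimplecontr}) conjugates the closed-loop operator to a block form $A_0(\eps)+\Delta(\eps)$ and runs a quantitative resolvent estimate on $\abs{\mu}\geq 1$; you instead treat $\mu=1$ as an isolated semisimple eigenvalue of $L_0$ with $r$-dimensional spectral subspace $\set{0}\times\C^r$ and apply analytic perturbation theory, identifying the first-order eigenvalue corrections as $\gs(G_2\Pop K_0)$ through the spectral projection $\pmatsmall{0&0\\ G_2\Ltedop{C}R(1,\Ltedop{A})&I}$. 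This is a clean and valid alternative here precisely because $Z=\C^r$ is finite-dimensional; note that it would not transfer to Theorem~\ref{thm:RORPcontroller}, where the eigenvalue $1$ of $L_0$ has infinite multiplicity, whereas the paper's resolvent argument covers both settings uniformly. Two small points to tighten: (i) besides the $r$ eigenvalues splitting off from $1$, you should record that the remainder of $\gs(L_\eps)$ stays in a neighbourhood of $\gs(\Ltedop{A})$, which follows from upper semicontinuity of the spectrum under the norm-small perturbation $L_\eps-L_0=O(\eps)$; (ii) the robustness claim concerns arbitrary (not necessarily small) stability-preserving perturbations, so rather than invoking continuity of $\YP\Poppert K_0$ in the perturbation you should derive its invertibility directly from the exponential stability of the perturbed closed loop together with $1\in\rho(U_{\tilde A}(\persym,0))$ via the Schur complement of $I-\tilde{A}_e$, which is what the paper does implicitly.
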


To estimate the asymptotic regulation error, the norms $\norm{(I-\YP)\Pop Kz}$ and $\norm{(I-\YP)\Pdop\wdist}$ can be approximated based on measurements from the system using the procedure in Section~\ref{sec:P1measurement}, and $\norm{(I-\YP)\yref}$ can be computed explicitly.

\subsection{Connection to Periodic Regulator Equations}
\label{sec:PerRegEqns}

In this section we relate the solution of the output regulation problem in Theorem~\ref{thm:ORPstatic} to the solvability of the \keyterm{periodic regulator equations} studied in~\cite{PauPoh12b}. In particular, it was shown in~\cite[Thm. 5]{PauPoh12b} that the control law solving the output regulation problem for an autonomous system 
and for reference signals $\yref(\cdot)$ and disturbance signals $\wdist(\cdot)$ can be expressed in terms of the unique periodic mild solution $(\Pi(\cdot),\Gamma(\cdot))$ of the equations
\begin{subequations}
  \label{eq:PerRE}
  \eqn{
  \label{eq:PerRE1}
  \dot{\Pi}(t) + \Pi(t)S(t)&=A(t)\Pi(t) + B(t)\Gamma(t) + B_d(t)E(t)\\
  \label{eq:PerRE2}
  0&= C(t)\Pi(t) + D(t)\Gamma(t) + F(t)
  }
\end{subequations}
where the periodic exosystem generating the signals $\yref(\cdot)$ and $\wdist(\cdot)$ is of the form
\begin{subequations}
  \label{eq:perexo}
  \eqn{
  \label{eq:perexo1}
  \dot{v}(t)&=S(t)v(t), \qquad v(0)=v_0\in\C^q\\
  \wdist(t)&=E(t)v(t)\\
  \yref(t)&=-F(t)v(t).
  }
\end{subequations}
In the situation of Theorem~\ref{thm:ORPstatic} the signals $\yref(\cdot)$ and $\wdist(\cdot)$ can be generated with choices $q=1$, $S(\cdot)\equiv 0$, $E(\cdot) = \wdist$ and $F(\cdot)=-\yref$. By ``periodic mild solution'' of~\eqref{eq:PerRE} we mean that $\Pi(\cdot)$ and $\Gamma(\cdot)$ are periodic functions, 
\eq{
\Pi(t) =U_A(t,0)\Pi(0)U_S(0,t) + \int_0^t U_A(t,s)(B(s)\Gamma(s)+B_d(s)E(s))U_S(s,t)ds,
}
and~\eqref{eq:PerRE2} is satisfied on $[0,\persym]$. Here $U_S(t,s)$ is the fundamental matrix of the differential equation~\eqref{eq:perexo1}. 
It was shown in~\cite[Thm. 5]{PauPoh12b} that output regulation for a stable autonomous system is achieved with a control input $u(t)=\Gamma(t)v(t)$.
The following theorem shows that the control law presented in Theorem~\ref{thm:ORPstatic} for the nonautonomous system~\eqref{eq:plantintro} is of the same form.

\begin{thm}
  \label{thm:PerREconnection}
  The control law $u(\cdot)$ in Theorem~\textup{\ref{thm:ORPstatic}} is of the form $u(\cdot)=\Gamma(\cdot)v(\cdot)$ where the pair $(\Pi(\cdot),\Gamma(\cdot))$ is a periodic mild solution of the equations~\eqref{eq:PerRE} and $v(t)$ is the state of the exosystem~\eqref{eq:perexo} generating the reference and disturbance signals.
\end{thm}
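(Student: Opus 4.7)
The plan is to specialize the exosystem to the one-dimensional situation indicated in the statement, then to construct $\Pi(\cdot)$ explicitly as a periodic mild state, and finally to reduce the regulation constraint~\eqref{eq:PerRE2} to the hypothesis $\Pop\FFfun=\yref-\Pdop\wdist$ of Theorem~\ref{thm:ORPstatic} by direct substitution.

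First I would take $q=1$, $S(\cdot)\equiv 0$, $E(\cdot)=\wdist(\cdot)$, $F(\cdot)=-\yref(\cdot)$ and the initial condition $v_0=1$. Then $U_S(t,s)\equiv I$ and $v(t)\equiv 1$, so the candidate control law is $u(t)=\Gamma(t)v(t)=\Gamma(t)$, and the natural identification is $\Gamma(\cdot):=\FFfun(\cdot)$. In this reduction the periodic regulator equations~\eqref{eq:PerRE} collapse to an ordinary mild Cauchy problem
\eq{
\Pi(t) = \FM_A(t,0)\Pi(0) + \int_0^t \FM_A(t,s)\bigl(B(s)\FFfun(s)+B_d(s)\wdist(s)\bigr)\,ds
}
together with the algebraic constraint $\yref(t)=C(t)\Pi(t)+D(t)\FFfun(t)$ on $[0,\persym]$.

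Next I would pick the unique $\Pi(0)$ that forces $\Pi(\persym)=\Pi(0)$. Exponential stability gives $1\in\rho(\FM_A(\persym,0))$, so the choice
\eq{
\Pi(0) = (I-\FM_A(\persym,0))\inv \int_0^\persym \FM_A(\persym,s)\bigl(B(s)\FFfun(s)+B_d(s)\wdist(s)\bigr)\,ds
}
is forced and well-defined. The full periodicity $\Pi(t+\persym)=\Pi(t)$ for every $t\geq 0$ then follows from $\Pi(\persym)=\Pi(0)$, the periodicity of $B(\cdot),B_d(\cdot),\FFfun(\cdot),\wdist(\cdot)$, and the evolution identity $\FM_A(t+\persym,s+\persym)=\FM_A(t,s)$, via the routine splitting of $\int_0^{t+\persym}$ into $\int_0^\persym+\int_\persym^{t+\persym}$ and a shift of variables in the second integral.

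The step that carries the actual content is the verification of~\eqref{eq:PerRE2}. Substituting the formula for $\Pi(0)$ into $C(t)\Pi(t)+D(t)\FFfun(t)$ and recognising the combination $\FM_A(t,0)(I-\FM_A(\persym,0))\inv \FM_A(\persym,s)$ as $\KA(t,s)$ gives
\eq{
C(t)\Pi(t)+D(t)\FFfun(t) = (\Pop\FFfun)(t) + (\Pdop\wdist)(t)
}
by direct comparison with the definitions of $\Pop$ and $\Pdop$ in Section~\ref{sec:intro}. The hypothesis $\Pop\FFfun=\yref-\Pdop\wdist$ then yields $C(t)\Pi(t)+D(t)\FFfun(t)=\yref(t)$, which is exactly~\eqref{eq:PerRE2} with $F(\cdot)=-\yref(\cdot)$. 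This final algebraic matching between $\KA(t,s)$ and the free-evolution term $\FM_A(t,0)\Pi(0)$ is the only nontrivial point; the rest is bookkeeping once $\Pi(\cdot)$ has been correctly identified.
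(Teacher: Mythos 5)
Your proposal is correct and follows essentially the same route as the paper's own proof: the same specialization $q=1$, $S\equiv 0$, $E=\wdist$, $F=-\yref$, $v_0=1$, the same explicit periodic choice of $\Pi(0)=(I-\FM_A(\persym,0))\inv\int_0^\persym \FM_A(\persym,s)(B(s)\FFfun(s)+B_d(s)\wdist(s))\,ds$, and the same identification of $C(t)\Pi(t)+D(t)\FFfun(t)$ with $(\Pop\FFfun)(t)+(\Pdop\wdist)(t)$ via the kernel $\KA(t,s)$. The only difference is that you spell out the periodicity check and the choice of $\Pi(0)$ slightly more explicitly than the paper does.
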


\begin{proof}
The signals $\yref(\cdot)$ and $\wdist(\cdot)$ can be generated by choosing $q=1$, $S(t)\equiv 0$, $E(\cdot) = \wdist(\cdot)$, $F(\cdot)=-\yref(\cdot)$ and $v_0=1$. Denote by $\Gamma(\cdot)=u(\cdot) \in \Lploc[2](0,\infty;U_0)$ the $\persym$-periodic function satisfying $u(\cdot)=\FFfun(\cdot)$ on $[0,\persym]$ where 
$\Pop\FFfun=\yref-\Pdop\wdist$. Denote $f(\cdot) = B(\cdot)\Gamma(\cdot) + B_d(\cdot)\wdist(\cdot)$ and define $\Pi(\cdot)\in C(\R,\Lin(\C,X))=C(\R,X)$ such that for all $t\in[0,\persym]$
\eq{
\Pi(t) = U_A(t,0)\int_0^\persym (1-U_A(\persym,0))\inv U_A(\persym,s)f(s)ds + \int_0^t U_A(t,s)f(s)ds.
}
Since $S(\cdot)\equiv 0\in\C$, we have $U_S(t,s)=1\in\C$ for all $t\geq s$, and the form of $\Pi(\cdot)$ implies that it is the mild solution of~\eqref{eq:PerRE1} on $[0,\persym]$. Moreover, since $f(\cdot)$ is $\persym$-periodic and $U_A(t+\persym,s+\persym)=U_A(t,s)$ for all $t\geq s$,
a direct computation shows that also $\Pi(\cdot)$ is $\persym$-periodic. Thus $\Pi(\cdot)$ is a periodic mild solution of~\eqref{eq:PerRE1}. Finally, the definitions of the operator-valued functions $\Pi(\cdot)$ and $\Gamma(\cdot)$ imply that on $[0,\persym]$ we have
\eq{
 C(\cdot)\Pi(\cdot)+D(\cdot)\Gamma(\cdot) + F(\cdot)
= \Pop u + \Pdop \wdist - \yref = 0.
}
\end{proof}

Similarly, the functions $\FFfun^k$ in Theorem~\ref{thm:ORPfeedback} are related to solutions of periodic regulator equations of the form~\eqref{eq:PerRE}.

\begin{cor}
  \label{cor:PerREconnection}
  The periodic extensions $u^k(\cdot)$ of the functions $\FFfun^k$ in Theorem~\textup{\ref{thm:ORPfeedback}} are of the form $u^k(\cdot)=\Gamma_k(\cdot)$ where $\Gamma_k(\cdot)$ satisfies the following.
  \begin{itemize}
    \item[\textup{(a)}] If $k=0$, then  $(\Pi_k(\cdot),\Gamma_k(\cdot))$ is a periodic mild solution of~\eqref{eq:PerRE} with $q=1$, $S(\cdot)\equiv 0$, $E(\cdot)\equiv 0$ and $F(\cdot)=-\yref$.
    \item[\textup{(b)}] If $k\neq 0$, then $(\Pi_k(\cdot),\Gamma_k(\cdot))$ is a periodic mild solution of~\eqref{eq:PerRE} with $q=1$, $S(\cdot)\equiv 0$, $E(\cdot)= \wdist^k$ and $F(\cdot)\equiv 0$.
  \end{itemize}
\end{cor}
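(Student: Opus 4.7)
The plan is to reduce both cases to direct specializations of the argument in the proof of Theorem~\ref{thm:PerREconnection}. In that proof, the key observation was that once one chooses the trivial scalar exosystem $q=1$, $S(\cdot)\equiv 0$, $v_0=1$ (so that $U_S(t,s)\equiv 1$ and $v(t)\equiv 1$), any periodic extension $\Gamma(\cdot)$ of a solution of $\Pop\FFfun=\yref-\Pdop\wdist$ can be paired with an explicit $\Pi(\cdot)$ built from the variation-of-constants formula to give a periodic mild solution of the periodic regulator equations. Both parts of the corollary correspond to the same equation with one of the two data signals set to zero.

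For part (a), set $q=1$, $S(\cdot)\equiv 0$, $E(\cdot)\equiv 0$, $F(\cdot)=-\yref$, $v_0=1$. Let $\Gamma_0(\cdot)=u^0(\cdot)$ be the $\persym$-periodic extension of $\FFfun^0$, put $f_0(\cdot)=B(\cdot)\Gamma_0(\cdot)$, and define, for $t\in[0,\persym]$,
\eq{
\Pi_0(t)=U_A(t,0)\int_0^\persym (I-U_A(\persym,0))\inv U_A(\persym,s)f_0(s)ds +\int_0^t U_A(t,s)f_0(s)ds,
}
extended $\persym$-periodically. The same computation as in the proof of Theorem~\ref{thm:PerREconnection} shows $\Pi_0(\cdot)$ is well defined, $\persym$-periodic, and is a mild solution of~\eqref{eq:PerRE1}. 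The regulation constraint~\eqref{eq:PerRE2} reduces on $[0,\persym]$ to
\eq{
C(\cdot)\Pi_0(\cdot)+D(\cdot)\Gamma_0(\cdot)+F(\cdot)=\Pop\FFfun^0-\yref=0.
}
For part (b) with $k\geq 1$, set $q=1$, $S(\cdot)\equiv 0$, $E(\cdot)=\wdist^k$, $F(\cdot)\equiv 0$, $v_0=1$. Let $\Gamma_k(\cdot)=u^k(\cdot)$, put $f_k(\cdot)=B(\cdot)\Gamma_k(\cdot)+B_d(\cdot)\wdist^k(\cdot)$, and define $\Pi_k(\cdot)$ by the same integral formula with $f_0$ replaced by $f_k$. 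Periodicity and the mild-solution property of~\eqref{eq:PerRE1} follow exactly as before, and on $[0,\persym]$ we get
\eq{
C(\cdot)\Pi_k(\cdot)+D(\cdot)\Gamma_k(\cdot)+F(\cdot)=\Pop\FFfun^k-\Pdop\wdist^k=0
}
by the defining property of $\FFfun^k$. In both cases $v(t)\equiv 1$, so the feedforward input takes the form $u^k(\cdot)=\Gamma_k(\cdot)v(\cdot)=\Gamma_k(\cdot)$, as claimed.

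There is no real mathematical obstacle here: the corollary is essentially the statement that the proof of Theorem~\ref{thm:PerREconnection} is linear in the data $(\yref,\wdist)$ and therefore splits into independent pieces. The only care needed is bookkeeping—tracking which of $E$ and $F$ should be set to zero in each case so that the identities $\Pop\FFfun^0=\yref$ and $\Pop\FFfun^k=\Pdop\wdist^k$ play the role that $\Pop\FFfun=\yref-\Pdop\wdist$ played in the theorem.
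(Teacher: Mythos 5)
Your overall strategy --- specializing the proof of Theorem~\ref{thm:PerREconnection} to the two data configurations --- is exactly what the paper intends; the corollary is stated without proof precisely because it is meant to follow ``similarly,'' and part~(a) of your argument is correct as written. However, your verification of the regulation constraint in part~(b) contains a sign error that makes the displayed identity false. With your choice $f_k=B(\cdot)\Gamma_k(\cdot)+B_d(\cdot)\wdist^k(\cdot)$ and the periodic fixed-point formula for $\Pi_k$, one computes
\eq{
C(t)\Pi_k(t) = (\Pop\Gamma_k)(t)-D(t)\Gamma_k(t)+(\Pdop\wdist^k)(t),
}
because the kernel $\KA(t,s)$ together with the Volterra part reproduces $\Pop$ (minus its feedthrough term) when applied to $B\Gamma_k$ and reproduces $\Pdop$ when applied to $B_d\wdist^k$, both with a \emph{plus} sign. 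Hence
\eq{
C(\cdot)\Pi_k(\cdot)+D(\cdot)\Gamma_k(\cdot)+F(\cdot)=\Pop\FFfun^k+\Pdop\wdist^k=2\,\Pdop\wdist^k,
}
not $\Pop\FFfun^k-\Pdop\wdist^k$, and this is not zero in general since Theorem~\ref{thm:ORPfeedback} normalizes $\FFfun^k$ by $\Pop\FFfun^k=+\Pdop\wdist^k$ (consistent with the combination $\FFfun=\FFfun^0-\sum_k v_k\FFfun^k$). You silently flipped the sign to force the conclusion.

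The repair is small but should be stated: for $k\geq 1$ the pair that solves~\eqref{eq:PerRE} with $E(\cdot)=\wdist^k$ and $F(\cdot)\equiv 0$ is obtained from $\Gamma_k=-u^k$ (equivalently, one may keep $\Gamma_k=u^k$ but take $E(\cdot)=-\wdist^k$). Indeed, the converse part of Theorem~\ref{thm:ORPstatic} shows that any periodic input regulating the output to zero against the disturbance $\wdist^k$ must satisfy $\Pop\Gamma_k=-\Pdop\wdist^k$, which identifies $\Gamma_k$ with $-\FFfun^k$ rather than $\FFfun^k$. This sign discrepancy is arguably already latent in the corollary's statement, but a correct proof has to surface it rather than assert an identity that does not hold; with that one adjustment the rest of your argument (periodicity of $\Pi_k$, the mild-solution property of~\eqref{eq:PerRE1}, and the reduction $u^k(\cdot)=\pm\Gamma_k(\cdot)v(\cdot)$ with $v\equiv 1$) goes through verbatim.
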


\section{The Proofs of The Main Results}
\label{sec:proofsmain}

In this section we present the proofs of the theorems in Section~\ref{sec:contrdesign}. We begin by representing the periodic system~\eqref{eq:plantintro} as a discrete-time system using the lifting technique~\cite{BamPea91}. In Section~\ref{sec:discreteRORP} we recall and extend the theory of output regulation for infinite-dimensional discrete-time systems and in particular introduce the internal model principle for systems with infinite-dimensional input and output spaces. The proofs of the theorems in Section~\ref{sec:contrdesign} are based on combining the results in Sections~\ref{sec:liftedsys} and~\ref{sec:discreteRORP} and they are presented in Section~\ref{sec:proofs}.

\subsection{The Lifted System}
\label{sec:liftedsys}

The ``lifted system''~\cite{BamPea91} corresponding to the periodic system~\eqref{eq:plantintro} is an autonomous discrete-time system
\begin{subequations}
  \label{eq:plantlifted}
  \eqn{
  \Lted{x}_{n+1}&= \Ltedop{A}\Lted{x}_n + \Ltedop{B}\Lted{u}_n+ \Ltedop{B}_d\Lted{w}_n, \qquad 
  \Lted{x}_0 = x_0\in X\\
  \Lted{y}_n &= \Ltedop{C}\Lted{x}_n + \Ltedop{D}\Lted{u}_n+ \Ltedop{D}_d\Lted{w}_n,
  }
\end{subequations}
on the space $X$, where the 
lifted state~$\Lted{x}_n$, the inputs~$\Lted{u}_n$ and $\Lted{w}_n$ and the output~$\Lted{y}_n$ are given by
\eq{
\Lted{x}_n &= x(n\persym), \\
\Lted{u}_n &= u(n\persym+\cdot)\in U = \Lp[2](0,\persym;U_0)\\
\Lted{w}_n &= w(n\persym+\cdot)\in U_d = \Lp[2](0,\persym;U_{d0})\\
\Lted{y}_n &= y(n\persym+\cdot)\in Y = \Lp[2](0,\persym;Y_0).
}
The operators 
$\Ltedop{A}\in \Lin(X)$, $\Ltedop{B}\in \Lin(U,X)$, $\Ltedop{B}_d\in \Lin(U_d,X)$, $\Ltedop{C}\in \Lin(X,Y)$, $\Ltedop{D}\in \Lin(U,Y)$, and $\Ltedop{D}_d\in \Lin(U_d,Y)$ are such that for all $\Lted{x}\in X$, $\Lted{u}\in U$, and $\Lted{w}\in U_d$ 
\eq{
\Ltedop{A}\Lted{x} &= \FM_A(\persym,0)\Lted{x}\\
\Ltedop{B}\Lted{u} &= \int_0^\persym\FM_A(\persym,s)B(s)\Lted{u}(s)ds\\
\Ltedop{B}_d\Lted{w} &= \int_0^\persym\FM_A(\persym,s)B_d(s)\Lted{w}(s)ds\\
\Ltedop{C}\Lted{x} 
&= C(\cdot)\FM_A(\cdot,0)\Lted{x}\\
\Ltedop{D}\Lted{u} &=  C(\cdot)\int_0^\cdot \FM_A(\cdot,s)B(s)\Lted{u}(s)ds +D(\cdot)\Lted{u}(\cdot)  \\
\Ltedop{D}_d\Lted{w} &=  C(\cdot)\int_0^\cdot \FM_A(\cdot ,s)B_d(s)\Lted{w}(s)ds .
}
The lifted system~\eqref{eq:plantlifted} is an autonomous discrete-time system on the Banach space $X$ with infinite-dimensional input and output spaces. 
Due to Lemma~\ref{lem:UAexpstabchar} and
$\Ltedop{A}=\FM_A(\persym,0)$ the lifted system is exponentially stable if and only if~\eqref{eq:plantintro} is exponentially stable.
We denote the transfer functions of the lifted system with
\eq{
\Ltedop{P}(\mu) &= \Ltedop{C}(\mu I-\Ltedop{A})\inv \Ltedop{B} + \Ltedop{D} \in \Lin( U, Y), 
\\
\Ltedop{P}_d(\mu) &= \Ltedop{C}(\mu I-\Ltedop{A})\inv \Ltedop{B}_d + \Ltedop{D}_d \in \Lin( U, Y)
}
for all $\mu\in\rho(\Ltedop{A})$. It is immediate that the operators $\Pop$ and $\Pdop$ are related to the lifted system by $\Pop = \Ltedop{P}(1)$ and $\Pdop=\Ltedop{P}_d(1)$.
Finally, for a $\persym$-periodic $\yref\in \Lploc[2](0,\infty;Y_0)$ we have
\eq{
\int_{n\persym}^{(n+1)\persym} \hspace{-2ex} \norm{y(t)-\yref(t)}^2 dt 
=\norm{\Lted{y}_n - \yref(\cdot)}_Y^2  \qquad 
n\geq 0.
}

\subsection{Controllers for Discrete-Time Systems}
\label{sec:discreteRORP}

In this section we recall and extend selected results for output regulation of a stable discrete-time system 
\begin{subequations}
  \label{eq:discrplant}
  \eqn{
  x_{n+1} &= Ax_n+Bu_n+B_dw_n, \qquad x_0\in X\\
  y_n &= Cx_n+Du_n + D_dw_n
  }
\end{subequations}
on a Banach space $X$. Here $A\in \Lin(X)$, $B\in \Lin(U,X)$, $B_d\in \Lin(U_d,X)$, $C\in \Lin(X,Y)$ and $D\in \Lin(U,Y)$. 
In this subsection $U$, $U_d$, and $Y$ may be general Hilbert spaces. We denote
\eq{
P(\mu) = CR(\mu,A)B+D , \qquad P_d(\mu)=CR(\mu,A)B_d + D_d
}
for $\mu\in\rho(A)$. We consider output tracking and disturbance rejection of signals generated by a discrete-time exosystem 
\begin{subequations}
  \label{eq:discrexo}
  \eq{
  v_{n+1}&= Sv_n, \qquad v_0\in \C^q\\
  w_n&=Ev_n\\
  \yrefn&=-Fv_n
  }
\end{subequations}
with $S=\diag(\mu_1,\ldots,\mu_q)\in \C^{q\times q}$, $\set{\mu_k}_{k=1}^q\subset \C$ with $\abs{\mu_k}=1$ for all $k\in\List{q}$, $E\in \Lin(\C^q,U_d)$, and $F\in \Lin(\C^q,Y)$.

We consider an error feedback controller of the form
\begin{subequations}
  \label{eq:contrdiscr}
  \eqn{
z_{n+1} &= G_1 z_n + G_2 e_n, \qquad z_0\in Z, \\
  u_n&= Kz_n
  }
\end{subequations}
on a Banach space $Z$. Here $G_1 \in \Lin(Z)$, $G_2\in \Lin(Y,Z)$, $K\in \Lin(Z,U)$, and $e_n = y_n-\yrefn$. 
The closed-loop system with the state $x_e^n=(x_n,z_n)^T\in X\times Z$ is of the form
\begin{subequations}
  \label{eq:CLsysdiscr}
  \eqn{
  x_{n+1}^e &= A_ex_n^e + B_e v_n, \qquad x_0^e=(x_0,z_0)^T\\
  e_n &= C_ex_n^e + D_e v_n
  }
\end{subequations}
with
  \eq{
  A_e &= \pmat{A& BK\\G_2C&G_1+G_2 DK}, \quad
   B_e =\pmat{B_dE\\G_2 (D_dE+F)},
  }
  $C_e=[C, ~ DK]$, and $D_e=D_dE+F$.  
The closed-loop system is exponentially stable if and only if $\abs{\gl}<1$ for all $\gl\in\gs(A_e)$.

\begin{dfn}
  In the \keyterm{output regulation problem} for the system~\eqref{eq:discrplant} and the exosystem~\eqref{eq:discrexo} the goal is to choose the controller~\eqref{eq:contrdiscr} in such a way that the closed-loop system~\eqref{eq:CLsysdiscr} is exponentially stable and there exist $M,\ga >0$ such that
  \eqn{
  \label{eq:discrerrconv}
  \norm{y_n-\yrefn} \leq Me^{-\ga n}(\norm{x_0}+\norm{z_0}+\norm{v_0})
  }
  for all initial states $x_0\in X$, $z_0\in Z$, and $v_0\in \C^q$.

In \keyterm{robust output regulation} we in addition require that if the parameters $\sysops$ are perturbed to $\sysopspert$ in such a way that the exponential closed-loop stability is preserved, then~\eqref{eq:discrerrconv} holds for some constants $M,\ga>0$ and 
for all initial states.
\end{dfn}

\begin{thm}
  \label{thm:ORPchar}
  Assume that $\set{\mu_k}_{k=1}^q\subset \rho(A)$ and that the closed-loop system with the controller $(G_1,G_2,K)$
is exponentially stable.
Then $(G_1,G_2,K)$ solves the output regulation problem if and only if for all $k\in \List{q}$ the equations
\begin{subequations}
  \label{eq:ORPchareqns}
  \eqn{
  P(\mu_k)Kz_k &= -F\phi_k - P_d(\mu_k)E\phi_k\\
  (\mu_k-G_1)z_k &= 0
  }
\end{subequations}
have solutions $z_k\in Z$. Here $\set{\phi_k}_{k=1}^q$ denotes the Euclidean basis of $\C^q$.
\end{thm}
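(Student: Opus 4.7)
The plan is to reduce the characterization of output regulation to a discrete Sylvester equation together with a regulation constraint, and then exploit the fact that $S = \diag(\mu_1,\ldots,\mu_q)$ is diagonal to decouple everything column by column. First I would introduce the operator $\Sigma:\C^q\to X\times Z$ defined column-wise by $\Sigma\phi_k = R(\mu_k,A_e)B_e\phi_k$. Since the closed-loop system is exponentially stable we have $\sigma(A_e)\subset\set{\abs{\lambda}<1}$, and since $\abs{\mu_k}=1$ we get $\mu_k\in\rho(A_e)$, so $\Sigma$ is well-defined and satisfies the Sylvester equation $\Sigma S = A_e\Sigma + B_e$.

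Next I would perform the change of variables $\tilde{x}_n^e = x_n^e - \Sigma v_n$, which gives $\tilde{x}_{n+1}^e = A_e\tilde{x}_n^e$ and hence exponential decay of $\tilde{x}_n^e$, and
\begin{equation*}
e_n = C_e x_n^e + D_e v_n = C_e\tilde{x}_n^e + (C_e\Sigma + D_e)v_n.
\end{equation*}
Because $\abs{\mu_k}=1$ the quantity $\norm{v_n}$ is bounded below (for suitable $v_0$) and above uniformly in $n$, so the exponential decay of $e_n$ for all initial data is equivalent to the regulation constraint $C_e\Sigma + D_e = 0$. Writing $\Sigma\phi_k = (\pi_k,\gamma_k)^T$ with $\pi_k\in X$ and $\gamma_k\in Z$, the Sylvester equation and regulation constraint become the three relations
\begin{align*}
\mu_k\pi_k &= A\pi_k + BK\gamma_k + B_d E\phi_k,\\
\mu_k\gamma_k &= G_2 C\pi_k + (G_1+G_2DK)\gamma_k + G_2(D_dE+F)\phi_k,\\
0 &= C\pi_k + DK\gamma_k + (D_dE+F)\phi_k.
\end{align*}

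The key algebraic step is to substitute the third equation into the second, which collapses the $G_2$-terms and leaves $(\mu_k - G_1)\gamma_k = 0$, giving the second regulator equation with $z_k = \gamma_k$. From the first equation, the assumption $\mu_k\in\rho(A)$ yields $\pi_k = R(\mu_k,A)(BK\gamma_k + B_dE\phi_k)$, and substituting into the third equation produces
\begin{equation*}
[CR(\mu_k,A)B + D]K\gamma_k + [CR(\mu_k,A)B_d + D_d]E\phi_k + F\phi_k = 0,
\end{equation*}
which is exactly $P(\mu_k)Kz_k = -F\phi_k - P_d(\mu_k)E\phi_k$ with $z_k=\gamma_k$. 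This gives necessity. For sufficiency, given solutions $z_k$ of the regulator equations I would define $\gamma_k = z_k$ and $\pi_k = R(\mu_k,A)(BKz_k + B_dE\phi_k)$ and assemble these into $\Sigma$; reversing the computations above shows that $\Sigma$ satisfies the Sylvester equation and that $C_e\Sigma + D_e = 0$, which together with exponential stability of $A_e$ yields the required exponential decay of $e_n$.

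The main obstacle I expect is keeping the algebraic manipulation between the coupled Sylvester components clean and verifying carefully that both implications go through with only the assumption $\mu_k\in\rho(A)$ (not the stronger $\mu_k\in\rho(A_e)$, which is however automatic from exponential closed-loop stability); in particular, justifying that the second regulator equation emerges with $G_1$ in place of $A_e$ requires the correct cancellation using the third equation before inverting $\mu_k - A$ in the first. Everything else, including the infinite-dimensionality of $X$, $Z$, $U$, $U_d$ and $Y$, is handled uniformly via the resolvent $R(\mu_k,A_e)$ and the bounded linear operators involved, so no delicate functional-analytic arguments should be needed beyond standard discrete Sylvester theory.
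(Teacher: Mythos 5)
Your proposal is correct and follows essentially the same route as the paper: the paper reduces the problem to the solvability of the closed-loop regulator equations $\Sigma S = A_e\Sigma + B_e$, $0 = C_e\Sigma + D_e$ (citing its continuous-time analogues) and then invokes the references for the column-by-column reduction to~\eqref{eq:ORPchareqns} using $\mu_k\in\rho(A)$, which is exactly the decomposition $\Sigma\phi_k=(\pi_k,\gamma_k)^T$ and the cancellation of the $G_2$-terms that you carry out explicitly. Your write-up simply makes self-contained what the paper delegates to \cite{HamPoh10,PauPoh10,PauPoh13a,PauPoh14a}.
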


\begin{proof}
  Exactly as in the continuous-time case in~\cite{HamPoh10,PauPoh10} a controller stabilizing the closed-loop system solves the 
output regulation problem if and only if the ``regulator equations''
  \begin{subequations}
    \label{eq:regeqnsdiscr}
    \eqn{
    \label{eq:regeqnsdiscr1}
    \Sigma S &= A_e \Sigma + B_e\\
    \label{eq:regeqnsdiscr2}
    0& = C_e\Sigma + D_e.
    }
  \end{subequations}
  have a solution $\Sigma\in \Lin(\C^q,X\times Z)$.
The operators $(A_e,B_e,C_e,D_e)$
of the closed-loop system~\eqref{eq:CLsysdiscr} 
and the regulator equations~\eqref{eq:regeqnsdiscr} are of the same form as in the continuous-time case. Since $\set{\mu_k}_{k=1}^q\subset \rho(A)$,
the equivalence of the solvability of~\eqref{eq:ORPchareqns} and the solvability of~\eqref{eq:regeqnsdiscr} can be shown as in the proofs of~\cite[Thm. 4]{PauPoh13a} and~\cite[Thm. 5.1]{PauPoh14a}.  
\end{proof}

Theorem~\ref{thm:ORPchar} implies that for output regulation it is necessary that 
$F\phi_k + P_d(\mu_k)E\phi_k\in \ran(P(\mu_k))$ for all $k$.

\begin{thm}
  \label{thm:ORPsimplecontr}
  Assume the system~\eqref{eq:discrplant} is exponentially stable and $S=I\in\C^{q\times q}$. Assume further that $F\phi_k + P_d(1)E\phi_k\in \ran(P(1))$ for all $k\in \List{q}$ and let $\set{u_k}_{k=1}^r\subset U$ be a minimal set of linearly independent vectors such that 
\eq{
F\phi_k + P_d(1)E\phi_k \in \Span \set{P(1)u_j}_{j=1}^r, \quad   \forall k\in \List{q}.
}
Choose the controller~\eqref{eq:contrdiscr} on $Z=\C^r$ in such a way that $G_1=I\in\C^{r\times r}$, $G_2=-(P(1)K_0Q)^\ast\in \Lin(Y,\C^r)$, and $K=\eps K_0Q$ where $K_0 = [u_1,\ldots,u_r]\in \Lin(\C^r,U)$, $Q\in \C^{r\times r}$ is invertible, and
$\eps>0$.
Then there exists $\eps^\ast>0$ such that for all $0<\eps\leq \eps^\ast$ the controller solves the output regulation problem.
\end{thm}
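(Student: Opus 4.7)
The plan is to invoke Theorem~\ref{thm:ORPchar}. Since $S=I\in \C^{q\times q}$, the only exosystem eigenvalue is $\mu_k=1$, and a closed-loop stabilizing triple $(G_1,G_2,K)$ solves the output regulation problem if and only if for each $k\in \List{q}$ there is $z_k\in \C^r$ with $P(1)Kz_k = -F\phi_k-P_d(1)E\phi_k$ and $(I-G_1)z_k=0$. The second equation is automatic because $G_1=I$, and with $K=\eps K_0Q$ the first becomes $\eps P(1)K_0Qz_k = -(F\phi_k+P_d(1)E\phi_k)$. By hypothesis the right-hand side lies in $\Span\set{P(1)u_j}_{j=1}^r=\ran(P(1)K_0)$, and minimality of $\set{u_j}_{j=1}^r$ forces $\set{P(1)u_j}_{j=1}^r$ to be linearly independent (otherwise one vector could be deleted without destroying the containment). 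Hence $P(1)K_0$, and so $T:=P(1)K_0Q:\C^r\to Y$, is injective, and the equation for $z_k$ has a unique solution for every $\eps>0$.

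The substantive step is to show that the closed-loop operator
\eq{
A_e(\eps)=\pmat{A & \eps BK_0Q \\ G_2 C & I+\eps G_2 DK_0Q}
}
has spectral radius strictly less than one for all sufficiently small $\eps>0$. I would argue via analytic perturbation theory about $\eps=0$. At $\eps=0$ the operator is block lower triangular, so $\gs(A_e(0))=\gs(A)\cup\set{1}$, and exponential stability of the plant places $\gs(A)$ in a compact subset of the open unit disk; in particular $1\in\rho(A)$, and the two spectral pieces are well separated. A direct computation shows $\ker(A_e(0)-I)^k=\set{0}\times \C^r$ for every $k\geq 1$, so $1$ is a semisimple eigenvalue of algebraic multiplicity $r$. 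By upper semicontinuity of the spectrum together with Riesz-projection perturbation, for all small $\abs{\eps}$ the spectrum of $A_e(\eps)$ splits into a piece $\Sigma_A(\eps)$ near $\gs(A)$, which remains inside the open disk, and a cluster $\set{\mu_j(\eps)}_{j=1}^r$ near $1$.

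To locate the cluster I would reduce the eigenvalue equation $A_e(\eps)(x,z)^T=\mu(x,z)^T$ by eliminating $x$: for $\mu\in\rho(A)$ the first block row gives $x=\eps R(\mu,A)BK_0Qz$, and substitution into the second yields
\eq{
(\mu-1)z = \eps\bigl(G_2 CR(\mu,A)BK_0Q + G_2 DK_0Q\bigr)z.
}
Expanding $\mu=1+\eps\gl+o(\eps)$ shows the first-order coefficients $\gl$ are eigenvalues of
\eq{
M := G_2\bigl[CR(1,A)B+D\bigr]K_0Q = G_2 P(1)K_0Q = -T^\ast T.
}
Because $T$ is injective, $T^\ast T$ is positive definite on $\C^r$, so $\gs(M)\subset(-\infty,0)$. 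Hence $\mu_j(\eps)=1-\eps\gs_j+o(\eps)$ with $\gs_j>0$, giving $\abs{\mu_j(\eps)}<1$ for all small $\eps>0$; combined with the bound on $\Sigma_A(\eps)$ this yields $r(A_e(\eps))<1$, and since $A_e(\eps)\in \Lin(X\times\C^r)$ is bounded, exponential closed-loop stability follows.

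The main obstacle will be the uniform spectral-splitting argument when $X$ is infinite-dimensional: I must choose contours around $\set{1}$ and around $\gs(A)$ on which $R(\mu,A_e(\eps))$ is bounded uniformly in $\eps$, so that the Riesz projections retain their ranks and no spectrum leaks between the two clusters. Once this is secured, the reduced-eigenvalue computation above, together with the solvability established in the first step and Theorem~\ref{thm:ORPchar}, gives the theorem with $\eps^\ast$ equal to any sufficiently small positive number.
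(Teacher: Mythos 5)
Your proposal is correct, and it rests on the same two pillars as the paper's argument --- solvability of the equations~\eqref{eq:ORPchareqns} via Theorem~\ref{thm:ORPchar} (with the observation that minimality of $\set{u_j}_{j=1}^r$ forces $\set{P(1)u_j}_{j=1}^r$ to be linearly independent, hence $G_2$ surjective and $T^\ast T=G_2G_2^\ast>0$), and the negativity of $G_2P(1)K_0Q=-T^\ast T$ as the mechanism for stabilization --- but your stability step uses a genuinely different technique. The paper, following \cite[Thm.~8]{Pau16a}, conjugates the closed-loop operator by a bounded similarity into $A_0(\eps)+\Delta(\eps)$, where $A_0(\eps)$ is block lower triangular with diagonal blocks $A+\eps HG_2C$ and $1-\eps G_2G_2^\ast$ and where $\Delta(\eps)=O(\eps^2)$, and then closes the argument with the uniform resolvent bound $\eps\norm{R(\mu,1-\eps G_2G_2^\ast)}\leq M_0$ for $\abs{\mu}\geq 1$, so that $\Delta(\eps)R(\mu,A_0(\eps))$ is a small perturbation on the whole exterior of the unit disk at once; no eigenvalues need to be localized and there is no separate ``leakage'' issue to control. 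You instead run first-order spectral perturbation about $\eps=0$: split $\gs(A_e(\eps))$ by Riesz projections into a part near $\gs(A)$ and a rank-$r$ cluster near $1$, and show the cluster moves as $1+\eps\gl_j+o(\eps)$ with $\gl_j\in\gs(-T^\ast T)\subset(-\infty,0)$; your identification of the reduced operator as $G_2P(1)K_0Q$ and your computation that $1$ is a semisimple eigenvalue of $A_e(0)$ of multiplicity $r$ are both correct. The one item you flag --- uniform boundedness of $R(\mu,A_e(\eps))$ on the two separating contours so that the projections retain their ranks and no spectrum appears elsewhere --- is indeed the only remaining technical debt, and it is discharged by the standard Neumann-series argument since $\gs(A)$ lies in a compact subset of the open disk and $\norm{A_e(\eps)-A_e(0)}=O(\eps)$. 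The paper's route is more economical in infinite dimensions and yields quantitative resolvent bounds; yours is more transparent about \emph{why} the loop is stabilized (the $r$ critical eigenvalues at $1$ are pushed radially inward at rates given by the eigenvalues of $T^\ast T$) and gives the first-order location of the closed-loop poles.
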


\begin{proof}
  The structures of $G_1$ and $K_0$ imply that the equations~\eqref{eq:ORPchareqns} have solutions for all $k\in\List{q}$. 

  Similarly as in the proof of~\cite[Thm. 8]{Pau16a} we can show that the closed-loop system operator is boundedly similar to the operator
  \eq{
\pmat{A+\eps H G_2C&0\\-G_2C & \hspace{-2ex} 1+\eps G_2P(1)K_0Q} - \eps^2 \pmat{0&\hspace{-.5ex}HG_2P(1)K_0Q\\0&0}
}
where $H=-R(1,A)BK_0Q\in \Lin(Z,X)$
and $G_2P(1)K_0Q = -G_2 G_2^\ast $.
Since $\set{u_j}_{j=1}^r$ was chosen in such a way that $\set{P(1)u_j}_{j=1}^r$ is linearly independent, we have that $G_2$ is surjective and $G_2 G_2^\ast>0$. This implies that for small $\eps>0 $ the spectral radius of $1-\eps G_2 G_2^\ast$ is smaller than~$1$ and $\eps \norm{R(\mu,1-\eps G_2 G_2^\ast)}\leq M_0$ for some $M_0>0$. 
The above block operator is of the form $A_0(\eps)+\Delta(\eps)$, and
the fact that we can choose $\eps^\ast>0$ so that the closed-loop system is exponentially stable whenever $0<\eps\leq \eps^\ast$ follows from studying $\Delta(\eps)R(\mu,A_0(\eps))$ for $\mu\in\C$ with $\abs{\mu}\geq 1$.
\end{proof}

\begin{rem}
  \label{rem:ORPsimpleALTG2}
  Instead of $G_2=-(P(1)K_0Q)^\ast$, we could take any $G_2\in \Lin(Y,\C^r)$ such that the eigenvalues of the matrix $G_2P(1)K_0Q \in \C^{r\times r}$ have negative real parts, analogously as in~\cite[App. B]{HamPoh11}.
\end{rem}

The last two results in this section concern the robust output regulation problem. The following theorem presents the internal model principle for infinite-dimensional discrete-time systems. 
Conditions~\eqref{eq:Gconds} and~\eqref{eq:discrpcopy} offer two alternative definitions for an ``internal model''. Condition~\eqref{eq:discrpcopy} is a direct generalization of the classical internal model of Francis and Wonham~\cite{FraWon75a} and Davison~\cite{Dav76}, whereas the conditions~\eqref{eq:Gconds} studied in~\cite{HamPoh10,PauPoh10} have the advantage of being applicable for systems with infinite-dimensional output spaces.

\begin{thm}
  \label{thm:IMP}
  Assume the closed-loop system with the controller $(G_1,G_2,K)$ is exponentially stable. Then $(G_1,G_2,K)$ solves the robust output regulation problem if and only if 
  \begin{subequations}
    \label{eq:Gconds}
    \eqn{
    \ran(\mu_k-G_1)\cap \ran(G_2)&=\set{0}, \qquad \forall k\in \List{q}\\
    \ker(G_2)&=\set{0}.
    }
  \end{subequations}
 In particular, a stabilizing controller can solve the robust output regulation problem only if 
  \eqn{
  \label{eq:discrpcopy}
  \dim\ker(\mu_k-G_1)\geq \dim Y, 
  \qquad \quad \forall k\in \List{q}.
  }
  Finally, if $\dim Y<\infty$, then the condition~\eqref{eq:discrpcopy} is also sufficient for the robustness of the controller.
\end{thm}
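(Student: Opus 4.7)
The plan is to base everything on Theorem~\ref{thm:ORPchar}, which says that, under closed-loop exponential stability, $(G_1,G_2,K)$ solves the output regulation problem if and only if~\eqref{eq:ORPchareqns} are solvable at every $\mu_k$. Robustness then becomes the demand that~\eqref{eq:ORPchareqns} remain solvable under all perturbations preserving stability. A key auxiliary tool is the Schur-complement identity: since plant stability gives $\mu_k\in\rho(A)$ and closed-loop stability gives $\mu_k\in\rho(A_e)$, computing the Schur complement of $\mu_k-A_e$ with respect to the state block shows that $\mu_k\in\rho(A_e)$ is equivalent to the bounded invertibility of $\mu_k-G_1-G_2 P(\mu_k)K$, and the same holds for the perturbed transfer function $\tilde{P}(\mu_k)$ after any admissible perturbation.

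For the equivalence with~\eqref{eq:Gconds} I would mirror the continuous-time arguments of~\cite[Thm.~5.1]{PauPoh10} and~\cite[Thm.~6.1]{HamPoh10}. In the ``if'' direction, fix a perturbation $\sysopspert$ preserving stability, set $y_k = F\phi_k + \tilde{P}_d(\mu_k)E\phi_k$, and define $z_k = (\mu_k-G_1-G_2\tilde{P}(\mu_k)K)^{-1}G_2 y_k$, which is well-defined by the Schur-complement invertibility. A direct rearrangement shows that $(\mu_k-G_1)z_k = G_2(\tilde{P}(\mu_k)K z_k + y_k)$, which lies in $\ran(G_2)\cap\ran(\mu_k-G_1)=\set{0}$, and then $\ker(G_2)=\set{0}$ forces $\tilde{P}(\mu_k)Kz_k = -y_k$, so that~\eqref{eq:ORPchareqns} holds and Theorem~\ref{thm:ORPchar} gives output regulation for the perturbed plant. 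For the ``only if'' direction one constructs explicit perturbations that would make~\eqref{eq:ORPchareqns} unsolvable if either part of~\eqref{eq:Gconds} failed: the freedom to perturb $F$ exposes nonzero elements of $\ker(G_2)$ as obstructions, and allowable perturbations of $(A,B,C,D)$ move $P(\mu_k)$ around enough to detect any nonzero element of $\ran(\mu_k-G_1)\cap\ran(G_2)$, exactly as in the continuous-time case.

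For the necessary dimension bound~\eqref{eq:discrpcopy}, the Robust Output Regulation Problem lets $\yref$ be replaced by \emph{any} $\persym$-periodic function while leaving the plant untouched (so stability is preserved), and hence $F\phi_k$ can be any prescribed element of $Y$. Theorem~\ref{thm:ORPchar} then requires, for every such choice, a $z_k\in\ker(\mu_k-G_1)$ with $P(\mu_k)Kz_k = -F\phi_k-P_d(\mu_k)E\phi_k$; so the restricted map $P(\mu_k)K : \ker(\mu_k-G_1)\to Y$ must be surjective, forcing $\dim\ker(\mu_k-G_1)\geq\dim Y$. For the sufficiency when $\dim Y<\infty$, I would design $G_1$ with each $\mu_k$ as a semisimple eigenvalue of geometric multiplicity at least $\dim Y$ and choose $G_2$ to embed $Y$ into the corresponding eigenspaces; semisimplicity gives $\ran(\mu_k-G_1)\cap\ker(\mu_k-G_1)=\set{0}$, which together with the injective embedding yields both conditions in~\eqref{eq:Gconds}.

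The main obstacle I expect is the bookkeeping in the ``if'' direction above: the Schur-complement rearrangement has to be arranged so that both assumptions in~\eqref{eq:Gconds} are invoked \emph{exactly once} and so that $\tilde{P}(\mu_k)$ enters only through its bounded action, since the conclusion must be uniform over all admissible perturbations. A secondary difficulty is that the finite-dimensional sufficiency requires realizing range-disjointness and injectivity of $G_2$ simultaneously, which is straightforward once the $\mu_k$ are semisimple eigenvalues of $G_1$ but requires care when their geometric and algebraic multiplicities could differ.
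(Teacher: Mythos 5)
The paper itself disposes of this theorem in one line, by observing that the closed-loop operators $(A_e,B_e,C_e,D_e)$ and the regulator equations \eqref{eq:regeqnsdiscr} have the same form as in the continuous-time case and citing \cite[Sec.~4--6]{PauPoh10} and \cite[Thm.~7]{Pau16a}; your proposal is an attempt to actually carry out that argument, and for the equivalence with \eqref{eq:Gconds} and the necessity of \eqref{eq:discrpcopy} your outline follows the intended route. One technical caveat in your ``if'' direction: you route everything through $\tilde{P}(\mu_k)$ and the Schur complement $\mu_k-G_1-G_2\tilde{P}(\mu_k)K$, which presupposes $\mu_k\in\rho(\tilde{A})$. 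The admissible perturbations only preserve \emph{closed-loop} exponential stability, so the perturbed plant may have $\mu_k\in\sigma(\tilde{A})$ and $\tilde{P}(\mu_k)$ undefined; Theorem~\ref{thm:ORPchar} also assumes $\set{\mu_k}_k\subset\rho(A)$ and cannot be invoked for such perturbations. The standard repair (and what \cite{PauPoh10} does) is to argue directly with $R(\mu_k,A_e)$ and the constrained Sylvester equation \eqref{eq:regeqnsdiscr} rather than with the open-loop transfer function.

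The genuine gap is in the last claim. The statement ``if $\dim Y<\infty$ then \eqref{eq:discrpcopy} is also sufficient for robustness'' is about an \emph{arbitrary given} controller: any $(G_1,G_2,K)$ that exponentially stabilizes the closed loop and satisfies $\dim\ker(\mu_k-G_1)\geq\dim Y$ is robust. You instead \emph{design} a $G_1$ with semisimple eigenvalues $\mu_k$ and a $G_2$ embedding $Y$ into the eigenspaces, which at best produces \emph{one} controller satisfying \eqref{eq:Gconds} and does not prove the stated implication. What is actually needed is to show that closed-loop stability together with the dimension condition \emph{forces} \eqref{eq:Gconds}: e.g., invertibility of $\mu_k-G_1-G_2P(\mu_k)K$ restricted to $\ker(\mu_k-G_1)$ shows $G_2P(\mu_k)K$ is injective there, whence $\dim\ker(\mu_k-G_1)\leq\dim\ran(G_2)\leq\dim Y$, so equality holds, $P(\mu_k)K$ maps $\ker(\mu_k-G_1)$ onto $Y$, $G_2$ is injective, and the range condition follows by a further argument — this is the content of \cite[Sec.~6]{PauPoh10}. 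Moreover, your construction does not even verify \eqref{eq:Gconds} as claimed when $q\geq 2$ with distinct $\mu_k$: you need $\ran(\mu_k-G_1)\cap\ran(G_2)=\set{0}$ for \emph{every} $k$ with a single $G_2$, and placing $\ran(G_2)$ inside the $\mu_k$-eigenspace for one $k$ does not make it disjoint from $\ran(\mu_j-G_1)$ for $j\neq k$ (indeed the $\mu_k$-eigenspace is contained in $\ran(\mu_j-G_1)$ when $G_1$ is semisimple). So this part of the proposal needs to be replaced, not just tightened.
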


\begin{proof}
Since the operators $(A_e,B_e,C_e,D_e)$ of the closed-loop system~\eqref{eq:CLsysdiscr} and the regulator equations~\eqref{eq:regeqnsdiscr} are of the same form as in the continuous-time case, 
the proof can be completed as in~\cite[Sec. 4--6]{PauPoh10} (see also~\cite[Thm. 7]{Pau16a}).
\end{proof}

The following controller is a discrete-time special case of the one presented in~\cite{HamPoh11}, and the structure is also related to the controllers in~\cite{LogTow97,HamPoh00,UkaIwa90} where $\dim Y<\infty$.

\begin{thm}
  \label{thm:RORPsimplecontr}
  Assume the system is exponentially stable, the exosystem is such that $q=1$ and $S=1\in\C$, and $P(1)$ is surjective.
  Choose a Hilbert space $Z$ and the parameters $(G_1,G_2,K)$  in such a way that $G_1=I_Z\in \Lin(Z)$, $G_2\in \Lin(Y,Z)$ is boundedly invertible, and $K=\eps K_0$ where $\eps>0$ and $K_0\in \Lin(Y,U)$ is such that  $\gs(G_2 P(1)K_0)\subset \C_-$. Then there exists $\eps^\ast>0$ such that for all $0<\eps\leq \eps^\ast$ the controller solves the robust output regulation problem.
\end{thm}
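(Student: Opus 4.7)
The plan is to split the argument into two parts: (i) show that the structural choices of $(G_1,G_2)$ force the internal model conditions of Theorem~\ref{thm:IMP}, and (ii) establish exponential stability of the closed-loop operator $A_e$ for small $\eps>0$. Once both hold, Theorem~\ref{thm:IMP} then yields that $(G_1,G_2,K)$ solves the robust output regulation problem.

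Part (i) is immediate: with $q=1$, $\mu_1=1$, and $G_1=I_Z$ we have $\mu_1-G_1=0$, so $\ran(\mu_1-G_1)\cap\ran(G_2)=\set{0}$ trivially, and the bounded invertibility of $G_2$ gives $\ker(G_2)=\set{0}$. Hence the conditions~\eqref{eq:Gconds} of Theorem~\ref{thm:IMP} hold.

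For part (ii) I would adapt the similarity-transform technique used in the proof of Theorem~\ref{thm:ORPsimplecontr} (which itself follows~\cite[Thm.~8]{Pau16a}). Set $H=R(1,A)BK_0\in\Lin(Y,X)$, which is well-defined because exponential stability of the nominal system gives $1\in\rho(A)$, and let $T=\pmat{I & \eps H\\ 0 & I}$, a bounded invertible operator on $X\times Z$. Using the identities $(A-I)H=-BK_0$ and $CH+DK_0=P(1)K_0$ a direct block-matrix computation gives
\eq{
T\inv A_e T &= A_0(\eps) + \eps^2\Delta, \\
A_0(\eps) &= \pmat{A-\eps HG_2 C & 0\\ G_2 C & I+\eps G_2 P(1)K_0}, \qquad \Delta = \pmat{0 & -HG_2 P(1)K_0\\ 0 & 0}.
}
Since $A_0(\eps)$ is block lower triangular, $\gs(A_0(\eps))=\gs(A-\eps HG_2 C)\cup\gs(I+\eps G_2 P(1)K_0)$. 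The first set lies in $\set{|z|<1-\gd_1}$ for some $\gd_1>0$ and all small $\eps$ by a Neumann-series perturbation of the exponentially stable $A$; the second, by the spectral mapping theorem, equals $1+\eps\gs(G_2 P(1)K_0)\subset\set{|z|\leq 1-c\eps}$ for some $c>0$ and all small $\eps$, using that $\gs(G_2 P(1)K_0)\subset\C_-$ is compact.

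To absorb the $\eps^2\Delta$ correction I would run a resolvent argument on $\set{|\mu|\geq 1}$. The block-triangular structure of $A_0(\eps)$ gives an explicit formula for $R(\mu,A_0(\eps))$ in terms of the resolvents of its diagonal blocks, and the crucial estimate is $\eps\norm{R(\mu,I+\eps G_2 P(1)K_0)}\leq M_0$ uniformly for $|\mu|\geq 1$ and small $\eps>0$. This estimate follows from the rescaling $R(\mu,I+\eps B)=\eps\inv R((\mu-1)/\eps,B)$ together with analysis of $R(\cdot,G_2 P(1)K_0)$ along the image of $\set{|\mu|\geq 1}$ under $\mu\mapsto(\mu-1)/\eps$. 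Combined with the routine bound on $R(\mu,A-\eps HG_2 C)$, one obtains $\norm{\eps^2\Delta R(\mu,A_0(\eps))}\leq C\eps$, so $I+\eps^2\Delta R(\mu,A_0(\eps))$ is invertible for $\eps$ small and $\set{|\mu|\geq 1}\subset\rho(T\inv A_e T)=\rho(A_e)$. Hence $r(A_e)<1$ for all $\eps\in(0,\eps^\ast]$ with $\eps^\ast$ sufficiently small, giving exponential closed-loop stability. The main obstacle is the resolvent bound $\eps\norm{R(\mu,I+\eps G_2 P(1)K_0)}\leq M_0$: in~\cite[Thm.~8]{Pau16a} the analogous operator is negative self-adjoint and the bound is immediate from functional calculus, whereas here only the spectral condition $\gs(G_2 P(1)K_0)\subset\C_-$ is available, so a careful non-normal estimate along the curves $\set{(\mu-1)/\eps:|\mu|\geq 1}$ is required.
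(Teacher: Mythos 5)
Your proposal follows the paper's proof essentially verbatim: the paper likewise obtains the conditions~\eqref{eq:Gconds} from $G_1=I$ and the bounded invertibility of $G_2$, invokes Theorem~\ref{thm:IMP}, and proves closed-loop stability by the same similarity transform and resolvent-perturbation argument as in Theorem~\ref{thm:ORPsimplecontr} with $Q=I$ (the paper is in fact terser than you are, simply asserting that the earlier argument carries over under the hypothesis $\gs(G_2P(1)K_0)\subset\C_-$). The one step you flag as an obstacle --- the bound $\eps\norm{R(\mu,I+\eps G_2P(1)K_0)}=\norm{R((\mu-1)/\eps,\,G_2P(1)K_0)}\leq M_0$ without self-adjointness --- closes by a compactness argument rather than a delicate non-normal estimate: the set $\setm{(\mu-1)/\eps}{\abs{\mu}\geq 1}$ is the closed exterior $E_\eps$ of the open disk of radius $1/\eps$ centred at $-1/\eps$, these exteriors satisfy $E_\eps\subset E_{\eps^\ast}$ for $\eps\leq\eps^\ast$, and since $\gs(G_2P(1)K_0)$ is a compact subset of the open left half-plane it is contained in that disk for all $\eps\leq\eps^\ast$ with $\eps^\ast$ small enough, whence $R(\cdot,G_2P(1)K_0)$ is continuous on the closed set $E_{\eps^\ast}$, vanishes at infinity, and is therefore uniformly bounded on every $E_\eps$ with $\eps\leq\eps^\ast$.
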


\begin{proof}
The conditions~\eqref{eq:Gconds} are satisfied since $G_1 = I$ and $G_2$ is boundedly invertible. We can again show that the closed-loop system operator $A_e$ is similar to the block operator in the proof of Theorem~\ref{thm:ORPsimplecontr} with $Q=I$. Since $\gs(G_2 P(1)K_0)\subset \C_-$, the stability of the closed-loop system for small $\eps>0$ can be shown as in the proof of Theorem~\ref{thm:ORPsimplecontr}.
\end{proof}

\subsection{The Proofs of the Main Theorems}
\label{sec:proofs}

We can now combine the results in Sections~\ref{sec:liftedsys} and~\ref{sec:discreteRORP} to present the proofs of Theorems~\ref{thm:ORPstatic},~\ref{thm:ORPfeedback}, and~\ref{thm:RORPcontroller}.

\begin{proof}[Proof of Theorem~\textup{\ref{thm:ORPstatic}}]
Consider the lifted version~\eqref{eq:plantlifted} of the periodic system~\eqref{eq:plantintro} in the situation
   where $(\Lted{w}_n)_{n\geq 0}\subset U_d=\Lp[2](0,\persym;U_{d0})$ is such that $\Lted{w}_n=\wdist(\cdot)$ for all $n\geq 0$.
Choose $(\Lted{u}_n)_{n\geq 0}\subset U=\Lp[2](0,\persym;U_0)$ such that $\Lted{u}_n=\FFfun(\cdot)$ for all $n\geq 0$,
where $\FFfun\in U$ is such that $\Pop \FFfun = \yref - \Pdop \wdist$. Since the lifted system~\eqref{eq:plantlifted} is stable and since $(\Lted{u}_n)_{n\geq 0}$ and $(\Lted{w}_n)_{n\geq 0}$ are constant signals, it is well-known that the output of~\eqref{eq:plantlifted} satisfies
  \eq{
  \Lted{y}_n \stackrel{n\to\infty}{\longrightarrow} \Ltedop{P}(1)\FFfun + \Ltedop{P}_d(1)\wdist =
 \Pop\FFfun + \Pdop \wdist 
 =\yref(\cdot).
  }
  Moreover, since the lifted system is exponentially stable,  $\norm{\Lted{y}_n-\yref(\cdot)}_Y\leq Me^{-\ga n}\sqrt{\norm{x_0}^2+1}$ for some $M,\ga>0$.

  If the periodic system~\eqref{eq:plantintro} is only strongly stable and $1\in\rho(U_A(\persym,0))$, we have $1\in \rho(\Ltedop{A})$ and~\eqref{eq:plantlifted} is strongly stable in the sense that $\Ltedop{A}^n \Lted{x}\to 0$ as $n\to \infty$ for all $\Lted{x}\in X$. In this situation $\norm{\Lted{y}_n-\yref(\cdot)}_Y\to 0$ as $n\to \infty$ for all $x_0\in X$.

Finally, the converse statement follows from the property that if the input~$u$ is the $\persym$-periodic extension of $u_0\in\Lp[2](0,\persym;U_0)$, then the output satisfies $\norm{\Lted{y}_n-(\Pop u_0 + \Pdop\wdist)}_Y\to 0$ as $n\to \infty$.
\end{proof}

To prove Theorems~\ref{thm:ORPfeedback}--\ref{thm:RORPapprox} we need to show that the closed-loop stability in the sense of Section~\ref{sec:controlobjectives} is equivalent to the exponential stability of the discrete-time closed-loop system.

\begin{lem}
  \label{lem:CLstab}
  The closed-loop system consisting of the lifted system~\eqref{eq:plantlifted} and a discrete-time controller~\eqref{eq:contrdiscr} is exponentially stable if and only if there exist $M_0,M_1,\ga_0,\ga_1>0$ such that in the case where $\yref(\cdot)\equiv 0$ and $\wdist(\cdot)\equiv 0$ we have
  \begin{subequations}
    \label{eq:discrCLdecay}
  \eqn{
    \label{eq:discrCLdecay1}
  \norm{x(t)}&\leq M_0e^{-\ga_0 t}(\norm{x_0}+\norm{z_0}),\\
    \label{eq:discrCLdecay2}
  \qquad \norm{z_n}&\leq M_1e^{-\ga_1 n}(\norm{x_0}+\norm{z_0})
  }
  \end{subequations}
  for all $t\geq 0$ and $n\geq 0$ and for all
  $x_0\in X$ and $z_0\in Z$.
\end{lem}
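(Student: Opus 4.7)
The plan is to recognize that in the unforced closed-loop configuration, the discrete-time pair $(x_n, z_n)^T = A_e^n (x_0, z_0)^T$ evolves under the operator $A_e$ from~\eqref{eq:CLsysdiscr}, whose exponential stability (spectral radius strictly less than $1$) is equivalent to a joint bound $\|(x_n, z_n)^T\| \leq M e^{-\alpha n}(\|x_0\| + \|z_0\|)$. Thus the lemma reduces to showing that this joint discrete bound is equivalent to the pair of estimates~\eqref{eq:discrCLdecay1}--\eqref{eq:discrCLdecay2}.

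For the sufficiency direction, I would assume~\eqref{eq:discrCLdecay1}--\eqref{eq:discrCLdecay2} and simply evaluate~\eqref{eq:discrCLdecay1} at $t = n\persym$ to obtain $\|x_n\| \leq M_0 e^{-\ga_0 \persym n}(\|x_0\| + \|z_0\|)$. Combining this with~\eqref{eq:discrCLdecay2} yields discrete exponential decay of $\|(x_n, z_n)^T\|$, which is equivalent to $\|A_e^n\| \leq M e^{-\ga n}$.

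The necessity direction is the substantive one. Assuming $A_e$ is exponentially stable I would first extract $\|x_n\| + \|z_n\| \leq M e^{-\ga n}(\|x_0\| + \|z_0\|)$, which delivers~\eqref{eq:discrCLdecay2} immediately. To recover~\eqref{eq:discrCLdecay1}, I would pass to each sub-interval $[n\persym,(n+1)\persym]$ and use the mild-solution representation: since $\yref\equiv 0$ and $\wdist\equiv 0$, the control on this sub-interval is the $\Lp[2]$-function $u(n\persym + \cdot) = K z_n$, and
\eq{
x(t) = \FM_A(t, n\persym) x_n + \int_{n\persym}^t \FM_A(t,s) B(s) u(s)\, ds.
}
Exponential stability of $\FM_A$ together with the $\persym$-periodicity $\FM_A(t+\persym, s+\persym) = \FM_A(t,s)$ gives a uniform bound $\|\FM_A(t,s)\| \leq M'$ on $\{(t,s) : 0 \leq t-s \leq \persym\}$, and $B\in\Lp[\infty](\R,\Lin(U_0,X))$. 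Applying Cauchy--Schwarz to the integral term, I would estimate it by $C\|Kz_n\|_{\Lp[2](0,\persym;U_0)} \leq C\|K\|\|z_n\|$, with constants independent of $n$. This leads to $\|x(t)\| \leq C'(\|x_n\| + \|z_n\|) \leq C'' e^{-\ga n}(\|x_0\|+\|z_0\|)$ for $t \in [n\persym, (n+1)\persym]$, and the conversion $e^{-\ga n} \leq e^{\ga} e^{-\ga t/\persym}$ produces the continuous bound~\eqref{eq:discrCLdecay1}.

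The main (and essentially only) obstacle is the passage from sampled decay at $t = n\persym$ to continuous decay in $t$ between sampling instants. This is handled by the uniform-in-$n$ bounds on $\FM_A$ and $B$ on intervals of length $\persym$, which are available precisely because of the periodicity of $A(\cdot)$ and $B(\cdot)$ and the exponential stability of $\FM_A$, together with the Cauchy--Schwarz step needed to absorb the $\Lp[2]$-valued feedback $Kz_n$.
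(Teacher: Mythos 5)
Your proposal is correct and follows essentially the same route as the paper: the ``if'' direction by sampling the continuous bound at $t=n\persym$, and the ``only if'' direction by writing the mild solution on each sub-interval $[n\persym,(n+1)\persym)$ with input $Kz_n$, using periodicity to get bounds on $\FM_A(t,s)$ uniform over $0\leq t-s\leq\persym$, and absorbing the $\Lp[2]$-valued feedback via Cauchy--Schwarz before converting $e^{-\ga n}$ into $e^{-\ga_0 t}$. No gaps.
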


\begin{proof}
  The state of the closed-loop system satisfies
    \eq{
\pmat{\Lted{x}_{n+1}\\z_{n+1}} \hspace{-.5ex}=\hspace{-.5ex} \pmat{\Ltedop{A}&\Ltedop{B}K\\ G_2 \Ltedop{C}&\hspace{-1ex}G_1 + G_2 \Ltedop{D}K}\pmat{\Lted{x}_n\\ z_n} + \pmat{\Ltedop{B}_d\wdist\\ G_2(\Ltedop{D}_d\wdist-\yref)}
    }
    with initial state $(\Lted{x}_0,z_0)^T\in X\times Z$.
  The ``if'' part follows directly from the fact that $\Lted{x}_n= x(n\persym)$ for all $n\geq 0$.
  On the other hand, if the discrete closed-loop system is stable, there exist $M_1,M_2,\ga_1,\ga_2>0$ such that $\norm{z_n} \leq M_1 e^{-\ga_1 n}(\norm{x_0}+\norm{z_0})$ and $\norm{x(n \persym)}\leq M_2e^{-\ga_2 n}(\norm{x_0}+\norm{z_0})$ for all $n\geq 0$, and thus~\eqref{eq:discrCLdecay2} holds. 
  If $t=n\persym + t_0$ for some $n\geq 0$ and $0\leq t_0<\persym $, then 
  the periodicity and exponential stability of~\eqref{eq:plantintro} 
  together with $u(n\persym+\cdot)=\Lted{u}_n=Kz_n$
  imply that 
  \eq{
  \norm{x(t)}
  &=\Norm{U_A(t_0,0)x(n\persym) + \int_0^{t_0}\hspace{-.7ex} U_A(t_0,s)B(s)(Kz_n)(s)ds}\\
  &\leq (M_3 e^{-\ga_2 n} + M_4 \norm{K}e^{-\ga_1 n})(\norm{x_0}+ \norm{z_0})
  }
  for some constants $M_3,M_4>0$ independent of $t_0\in[0,\persym)$ and $n\geq 0$. 
  From this it follows that there exists $M_0>0$ such that also~\eqref{eq:discrCLdecay1} holds with $\ga_0=\min\set{\ga_1,\ga_2}/\persym$.
\end{proof}

\begin{proof}[Proof of Theorem~\textup{\ref{thm:ORPfeedback}}]
  The controller~\eqref{eq:fbcontrintro} is of the form~\eqref{eq:contrdiscr} where $e_n = y(n\persym+\cdot)-\yref(\cdot)=\Lted{y}_n-\yref$ on $[0,\persym]$. The $\persym$-periodic reference and disturbance signals can be expressed as constant discrete-time signals $\yrefnL\equiv \yref(\cdot)\in Y$ and $\wdistnL\equiv \sum_{k=1}^q v_k\wdist^k(\cdot)\in U_d$ and they can be generated with a $q+1$-dimensional exosystem
\eq{
\Lted{v}_{n+1} &=\Lted{v}_n, \qquad \Lted{v}_0\in\C^{q+1}\\
\wdistnL &= E\Lted{v}_n\\
\yrefnL &=-F\Lted{v}_n
}
satisfying $F\phi_0 = -\yref(\cdot)\in Y$ and $E\phi_k=\wdist^k(\cdot)\in U_d$ where $\set{\phi_k}_{k=0}^q$ is the Euclidean basis of $\C^{q+1}$, and $\Lted{v}_0 = (1,\vdvec^T)^T$.
Since $\Pop = \Ltedop{P}(1)$ and $\Pdop = \Ltedop{P}_d(1)$, Theorem~\ref{thm:ORPsimplecontr} implies that the controller with the choices of $(G_1,G_2,K)$ in Theorem~\ref{thm:ORPfeedback} solves the output regulation problem for the lifted system~\eqref{eq:plantlifted}.
In particular, for all intial states $x_0\in X$, $z_0\in Z$, and for all $\vdvecdef\in \C^q$
we have 
\eq{
\MoveEqLeft[4] \int_{n\persym}^{(n+1)\persym}\norm{y(t)-\yref(t)}^2dt
= \norm{\Lted{y}_n - \yrefnL}^2 \\
&\leq M^2e^{-2\ga n}\left( \norm{x_0}^2 + \norm{z_0}^2 + \norm{\vdvec}^2
+1 \right)
}
for some constants $M,\ga >0$ and for all $n\geq 0$. Finally, by Lemma~\ref{lem:CLstab} the closed-loop system is exponentially stable in the appropriate sense.
\end{proof}

\begin{proof}[Proof of Theorem~\textup{\ref{thm:RORPcontroller}}]
Analogously as in the proof of Theorem~\ref{thm:ORPfeedback} the first two parts of the robust output regulation problem follow from a direct application of Theorem~\ref{thm:RORPsimplecontr}. Also the third part of the problem is satisfied since by Lemma~\ref{lem:CLstab} the stability of the closed-loop system is equivalent to the stability of the lifted system with the discrete-time controller. Thus 
$\norm{\Lted{y}_n-\yrefL}_Y\to 0$ at exponential rates as $n\to \infty$ for all perturbations preserving the closed-loop stability and for all  $x_0\in X$, $z_0\in Z$, and $\vdvec\in \C^q$.
\end{proof}

\begin{proof}[Proof of Theorem~\textup{\ref{thm:RORPapprox}}]
The $\persym$-periodic signals $\yref(\cdot)$ and $\wdist(\cdot)$ are generated by a $1$-dimensional discrete-time exosystem with $S=1$,  $E=\wdist$, $F = -\yref$, and initial state $v_0=1$.
The full closed-loop system consisting of the lifted system $(\Ltedop{A},\Ltedop{B},\Ltedop{B}_d,\Ltedop{C},\Ltedop{D})$ and the controller $(I,G_{20}\YP,\eps K_0)$ is of the form~\eqref{eq:CLsysdiscr} with
  $x_n^e=(\Lted{x}_n,z_n)^T\in X\times Z$, 
  \eq{
  A_e = \pmat{\Ltedop{A}& \Ltedop{B}K\\G_{20}\YP\Ltedop{C}&I+G_{20}\YP \Ltedop{D}K}, \quad
   B_e =\pmat{\Ltedop{B}_d\wdist\\G_{20}\YP (\Ltedop{D}_d\wdist-\yref)},
  }
  $C_e=[\Ltedop{C}, ~ \Ltedop{D}K]$, and $D_e=\Ltedop{D}_d\wdist-\yref$. 
  Since $\gs(G_2 \Ltedop{P}(1) K_0)\subset \C_-$, the exponential stability of the closed-loop system for all sufficiently small $\eps>0$ can be shown as in the proof of Theorem~\ref{thm:ORPsimplecontr}.
  
  If we denote $P_e(\mu) = C_e R(\mu,A_e)B_e + D_e$, then the closed-loop stability and $v_n\equiv 1$ imply that the regulation error satisfies $e_n\to P_e(1)$ and
  \eq{
  \norm{y(n\persym+\cdot)-\yref(\cdot)}_Y = \norm{e_n} \to \norm{P_e(1)}
  }
  as $n\to \infty$. The first part of the proof is complete once we show that $P_e(1)=(I-\YP)(\Pop Kz + \Pdop \wdist - \yref)$ where $z\in Z$ is such that
  \eqn{
  \label{eq:RORPapprzeq}
  \YP\Pop Kz = \YP \yref - \YP \Pdop\wdist.
  }
  Since $\YP\Pop K\in \Lin(\C^r,Y_N)$ is surjective and $r=\dim Y_N$, equation~\eqref{eq:RORPapprzeq} has a unique solution.
  To compute $P_e(1)$, denote $(x,z)^T=R(1,A_e)B_e$. We then have
  \eq{
\pmat{I-\Ltedop{A}&-\Ltedop{B}K\\-G_{20}\YP\Ltedop{C}&\hspace{-1ex}-G_{20}\YP\Ltedop{D}K}\hspace{-1ex}\pmat{x\\z}   \hspace{-.5ex}=\hspace{-.5ex} \pmat{\Ltedop{B}_d \wdist\\ G_{20}\YP(\Ltedop{D}_d\wdist-\yref) }.
  }
  Since $1\in \rho(\Ltedop{A})$ the first equation implies $x = R(1,\Ltedop{A})(\Ltedop{B}Kz + \Ltedop{B}_d \wdist)$. Substituting $x$ to the second equation and using the invertibility of $G_{20}$ shows that $z\in Z$ is the unique solution of~\eqref{eq:RORPapprzeq}. Finally, a direct computation using~\eqref{eq:RORPapprzeq} shows that
  \eq{
  P_e(1) & = C_e \pmatsmall{x\\z} + D_e
  = \Pop Kz + \Pdop \wdist - \yref\\
  &= (I-\YP)(\Pop Kz + \Pdop \wdist - \yref).
  }

  If the parameters of the periodic system are perturbed in such a way that the exponential closed-loop stability is preserved, then for any signals $\yref(\cdot)$ and $\wdist(\cdot)$ the regulation error satisfies $\norm{e_n}\to \norm{\tilde{P}_e(1)}$, where $\tilde{P}_e(\mu)$ is the transfer function of the perturbed closed-loop system. If we also have $1\in \rho(\tilde{\Ltedop{A}})$, then 
   we can show analogously as above that
  $\tilde{P}_e(1)=(I-\YP)(\Poppert Kz + \Pdoppert \wdist - \yref)$ where $z\in Z$ is such that 
  $\YP\Poppert Kz = \YP \yref - \YP \Pdoppert\wdist$.  
\end{proof}

\section{Measuring $\Pop$ and $\Pdop \wdist^k$ From The System}
\label{sec:P1measurement}

In this section we introduce a simple method for approximating the operator $\Pop \in \Lin(U,Y)$ and the functions $\Pdop \wdist^k\in Y$ based on measurements from the output of the original periodic system~\eqref{eq:plantintro}.  Throughout this section we assume $U=\Lp[2](0,\persym;U_0)$ has an orthonormal basis $\set{\varphi_k}_{k=1}^\infty$ and $Y=\Lp[2](0,\persym;Y_0)$ has an orthonormal basis $\set{\psi_k}_{k=1}^\infty$.

It is well-known that since the discrete-time lifted system~\eqref{eq:plantlifted} is stable, the output $\Lted{y}_n$ corresponding to any initial state $x_0\in X$, the constant input $\Lted{u}_n\equiv \Lted{u}_0 \in U$ and disturbance $\wdistnL\equiv 0\in U_d$ satisfies \eq{
\Lted{y}_n \to \Ltedop{P}(1)\Lted{u}_0 = \Pop \Lted{u}_0
}
as $n\to \infty$.
In terms of the original periodic system this means that the output $y(\cdot)$ on the interval $[n\persym,(n+1)\persym)$ corresponding to the $\persym$-periodic input $u(\cdot)$ 
such that $u(\cdot)=u_0(\cdot)\in \Lp[2](0,\persym;U_0)$ on $[0,\persym]$
converges to the function 
\ieq{
(\Pop u_0)(\cdot)
}
as $n\to \infty$. In particular, if we choose the $\persym$-periodic input $u^k(\cdot)$ in such a way that $u^k(\cdot)=\varphi_k(\cdot)$ on $[0,\persym]$ for $k\in\N$, then the corresponding output $y^k(\cdot)$ on the interval $[n\persym,(n+1)\persym)$ converges to 
\eq{
(\Pop\varphi_k)(\cdot) = \sum_{l=1}^\infty c_{lk} \psi_l(\cdot)
}
in the $\Lp[2]$-norm as $n\to\infty$. The coefficients $c_{kl}=\iprod{(\Pop\varphi_k)(\cdot)}{\psi_l}_Y$ can thus be approximated with $c_{kl}\approx\iprod{y^k(n\persym+\cdot)}{\psi_l}_Y$ for a sufficiently large $n$. For large $M,N\in\N$ the matrix 
\eq{
\Pop_{MN} = (c_{kl})_{kl}\in\C^{N\times M}
}
can then be used as an approximation of the operator
$\Pop$ from the subspace $\Span \set{\psi_k}_{k=1}^M\subset U$ to the subspace $\Span \set{\varphi_l}_{l=1}^N\subset Y$. In particular, the solution of the operator equation $y=\Pop u$ with $y=\sum_{l=1}^\infty y_l\psi_l$ can be approximated with
\eq{
u_{MN}(\cdot) = \sum_{k=1}^M u_k\varphi_k(\cdot)
}
where $(u_1,\ldots,u_M)^T = \Pop_{MN}\pinv (y_1,\ldots,y_N)^T$.

A similar procedure can be used to approximate $\Pdop\wdist^k$. Indeed, for $\persym$-periodic inputs $u(\cdot)\equiv 0$ and $w(\cdot)$ such that $w(\cdot)=\wdist^k(\cdot)$ on $[0,\persym]$ the corresponding output $y^k(\cdot)$ satisfies $\norm{y^k(n\persym+\cdot)-\Pdop \wdist^k(\cdot)}_{\Lp[2]}\to 0$ as $n\to \infty$. Therefore the function $(\Pdop \wdist^k)(\cdot)$ can be approximated with $y^k(\cdot)$ on the interval $[n\persym,(n+1)\persym)$ for a sufficiently large~$n$.

The bases of $U$ and $Y$ can be chosen freely. 
If $X$, $U_0$, and $Y_0$ are real spaces, it is convenient to use real bases of $U$ and $Y$, in which case $\Pop_{MN}\in \R^{N\times M}$.

\section{Controller Design for Coupled Harmonic Oscillators}
\label{sec:HarmOsc}

In this section we consider a system of harmonic oscillators with periodically time-varying damping and a one-sided time-dependent coupling. The full system is of the form
\eq{
\ddot{q}_1(t) + a_1(t) \dot{q}_1(t) + q_1(t) &= b(t)u(t) + w_d^1(t)\\
\ddot{q}_2(t) + a_2(t) \dot{q}_2(t) + q_2(t) &= g(t)q_1(t)+ w_d^2(t).
}
where $a_1(\cdot)$, $a_2(\cdot)$, $b(\cdot)$, $g(\cdot)$ are $2\pi$-periodic functions 
such that
\eq{
&a_1(t) = 1+\cos(2t), \qquad a_2(t) = 2-\frac{\abs{\pi-t}}{\pi},\\
 & b(t) = 1+ \frac{t(2\pi-t)}{\pi}, \qquad g(t) = 1+\frac{\sin(3t)}{4}
}
for $t\in[0,2\pi]$.
Our aim is to design a control input $u(t)$ in such a way that the measured position $y(t)=q_2(t)$ of the second oscillator tracks the reference signal $\yref(t)=1+\sin(t)$ despite the disturbance signals $w_d^1(\cdot)$ and $w_d^2(\cdot)$ that are linear combinations of the functions $\cos(2t)$ and $\sin(t)$.

The coupled harmonic oscillators can be written as a periodic system of the form~\eqref{eq:plantintro} on $X=\R^4$ and with $\persym=2\pi$, $U_0 = \R$, $Y_0=\R$, $U_{d0}=\R^2$,  $A(\cdot)\in C_\persym(\R,\R^{4\times 4})$, $B(\cdot)\in C_\persym(\R,\R^4)$,  $B_d(\cdot)\in C_\persym(\R,\R^{4\times 2})$, $C(\cdot)\equiv C\in \R^{1\times 4}$, and $D(\cdot)\equiv 0\in \R$. The system is exponentially stable since 
$\abs{\gl}<1$ for all $\gl\in\gs(U_A(\persym,0))$.

The operator $\Pop\in \Lin(U,Y)$  can be approximated with a matrix $\Pop_{MN}$ based on measurements from the system using the method in Section~\ref{sec:P1measurement}. In particular, we approximate the elements in the spaces $U=Y= \Lp[2](0,\persym;\R)$ with $21$ basis functions of the form $\varphi_k(\cdot)=\psi_k(\cdot)=\frac{1}{\sqrt{2\pi}}e^{ik\cdot}$ for $-10\leq k\leq 10$. The interval of the measurement was chosen to be $[n\persym,(n+1)\persym)$ for $n=10$.
Similarly, the functions $y_{d,k}=\Pdop \wdist^k$ can be approximated based on measurements for the inputs
$\wdist^1 =(\cos(2\cdot),0)^T$, $\wdist^2=(\sin(\cdot),0)^T$, $\wdist^3=(0,\cos(2\cdot))^T$, and $\wdist^4=(0,\sin(\cdot))^T$.

\subsection{Feedforward Control}

If the disturbance signal is completely known, we can use the static control law presented in Theorem~\ref{thm:ORPstatic} to solve the output tracking problem. Figure~\ref{fig:OscFFoutput} shows the output of the controlled system
for the disturbance signals $w_d^1(t)=0.4\cos(2t)+0.3\sin(t)$ and $w_d^2(t) = 0.2\cos(2t)+0.6\sin(t)$ and the initial state $x_0=0\in\R^4$ of the system.

\begin{figure}[h!]
  \begin{center}
\includegraphics[width=.68\linewidth]{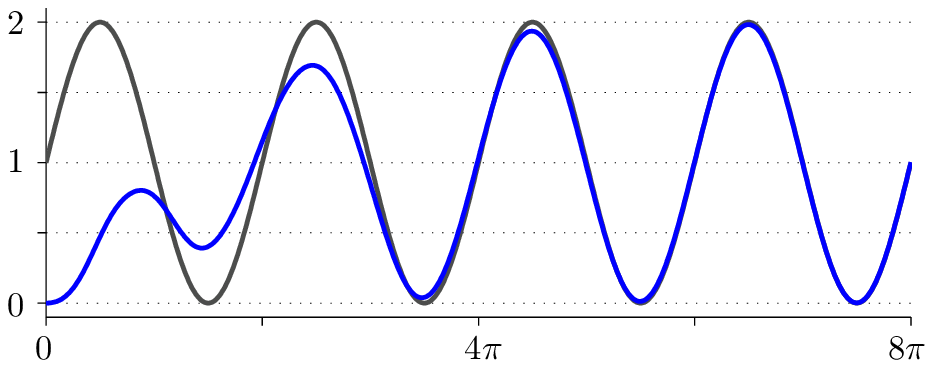}
  \end{center}
  \caption{Output (blue) with the feedforward control law.}
  \label{fig:OscFFoutput}
  \end{figure}

\subsection{Error Feedback Control}

If the amplitudes in the disturbance signals are unknown, the output regulation problem can be solved with a dynamic error feedback controller
 in Theorem~\ref{thm:ORPfeedback}. The controller is a $5$-dimensional discrete-time system 
\eq{
z_{n+1}=z_n - Q^\ast \pmat{\iprod{y(n\persym+\cdot)-\yref}{\yref}\\
( \iprod{y(n\persym+\cdot)-\yref}{y_{d,k}})_{k=1}^4
}
, \qquad n\geq 0
}
and the control input $u(\cdot)$ is determined by $u(n\persym+\cdot)=K_0z_n$ on the interval $[n\persym,(n+1)\persym)$ for all $n\geq 0$. As above, we approximate the functions 
$y_{d,k}(\cdot)=\Pdop \wdist^k$ 
and $\FFfun^k(\cdot)$ with $21$ basis functions $\varphi_l$ and $\psi_l$, respectively. We choose $K_0=[\FFfun^0,\ldots,\FFfun^4]\in \Lin(\C^5,U)$ and $Q=V\Lambda^{-1/2}$, where $V$ and $\Lambda$ are obtained from the  the singular value decomposition $V\Lambda V^\ast$ of the positive definite matrix $(\Pop K_0)^\ast \Pop K_0$. 

\begin{rem}
  \label{rem:Oscepschoice}
  The construction of the feedback controller requires $\eps>0$ to be chosen so that the closed-loop system is exponentially stable. The closed-loop stability can be tested for a given $\eps>0$ by simulating the original periodic system and the discrete-time controller on the interval $[0,2\pi]$ for initial states $x^e_0=(x_0,z_0)^T=\phi_k\in X\times Z=\C^{9}$, where $\phi_k$ are the Euclidean basis vectors. The final states $x^e_1=(x(2\pi),z_1)^T$ of the simulations are the corresponding columns of the closed-loop system matrix $A_e$ whose eigenvalues determine the closed-loop stability. These simulations can be used to optimize $\eps>0$ in such a way that the stability margin of the closed-loop system is sufficiently large while the imaginary parts of the eigenvalues of $A_e$ remain relatively small.
\end{rem}

Figures~\ref{fig:OscFBoutput} and~\ref{fig:OscFBerrors} show the output of the controlled system and the errors $\norm{y(n\persym +\cdot)-\yref(\cdot)}$ for the disturbance signals $w_d^1(t)=0.1\cos(2t)$ and $w_d^2(t) = 0.1\cos(2t)-0.1\sin(t)$ and initial state $x_0=0\in\R^4$. Using the procedure in Remark~\ref{rem:Oscepschoice} the parameter $\eps>0$ was chosen as $\eps = 0.25$, and we let $z_0=0\in \C^5$.

\begin{figure}[h!]
  \begin{center}
\includegraphics[width=.68\linewidth]{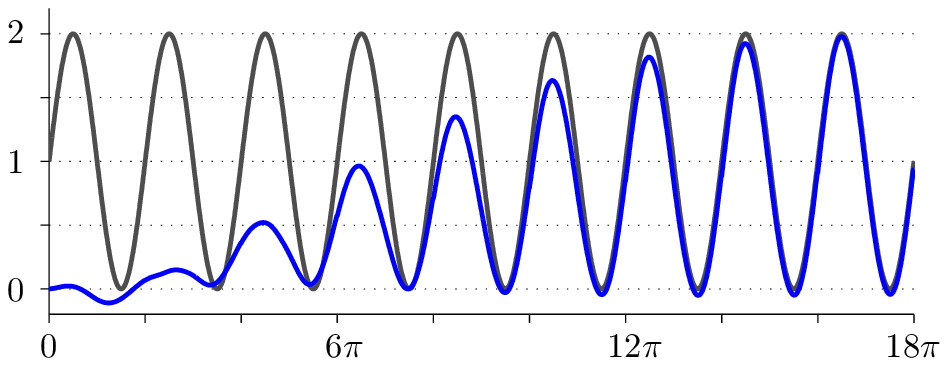}
  \end{center}
  \caption{Output (blue) with the feedback controller.}
  \label{fig:OscFBoutput}
\begin{center}
  \includegraphics[width=.68\linewidth]{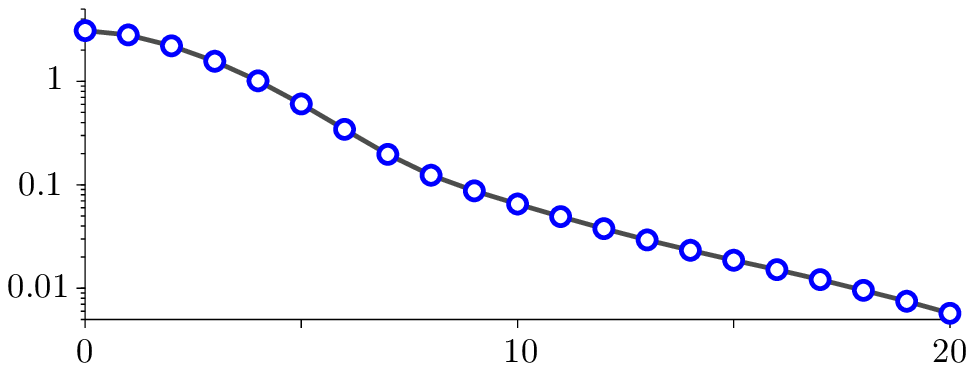}
  \end{center}
  \caption{Errors $\norm{y(n\persym+\cdot)-\yref}$ for $0\leq n\leq 20$ ($\log$-scale).
  }
  \label{fig:OscFBerrors}
\end{figure}

\subsection{Approximate Robust Control}

Finally, we construct a dynamic feedback controller in Theorem~\ref{thm:RORPapprox} to achieve approximate robust output tracking for the system of oscillators. 
If we choose $Y_N=\Span \set{\varphi_k}_{k=-7}^7$ where $\varphi_k = \frac{1}{\sqrt{2\pi}}e^{ik\cdot}$, then the constructed controller has dimension $r=\dim Y_N=15$. For the simulations we approximate the spaces $U=Y=\Lp[2](0,2\pi)$  with $\Span \set{\varphi_k}_{k=-14}^{14}$, and we denote by $\Pop_N$ the corresponding approximation of the operator $Q_N\Pop : U\to Y_N$.

The controller parameters were chosen so that $Z=\C^{15}$, $G_1 = I$, and $Q_N$ is the projection onto $Y_N$. In order to ensure that $\gs(G_{20}Q_N\Pop K_0)\subset \C_-$ we chose $G_{20}=\diag(\gs_1,\ldots,\gs_{15})\inv V_1^\ast$ and $K_0=-\tilde{V}_2$ where $V_1$, $V_2$, and $\Sigma=\diag(\gs_1,\ldots,\gs_{15})\in \R^{15\times 29}$ were obtained from the singular value decomposition $V_1\Lambda V_2^\ast$ of $\Pop_N\in \C^{15\times 29}$, and $\tilde{V}_2$ contains the first $r=15$ columns of $V_2$. Finally, using the procedure in Remark~\ref{rem:Oscepschoice} we chose $\eps=0.2$. Figures~\ref{fig:OscRoboutput} and~\ref{fig:OscRoberrors} show the behaviour of the output and the regulation error for a $2\pi$-periodic triangular reference signal and the disturbance signals $w_d^1(t)=0.3\sin(t)$ and $w_d^2(t)\equiv 0.2$, and for the intial states $x_0=0$ and $z_0=0$ of the system and the controller.

\begin{figure}[h!]
  \begin{center}
\includegraphics[width=.68\linewidth]{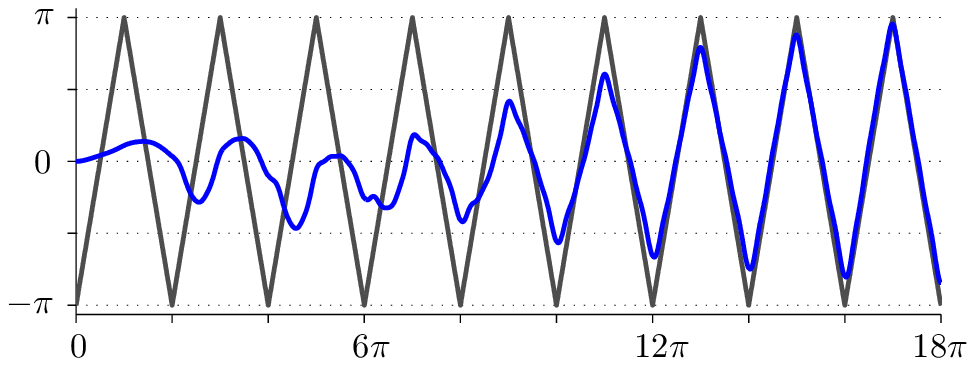}
  \end{center}
  \caption{Output (blue) with the feedback controller.}
  \label{fig:OscRoboutput}
\end{figure}
  \begin{figure}
\begin{center}
  \includegraphics[width=.68\linewidth]{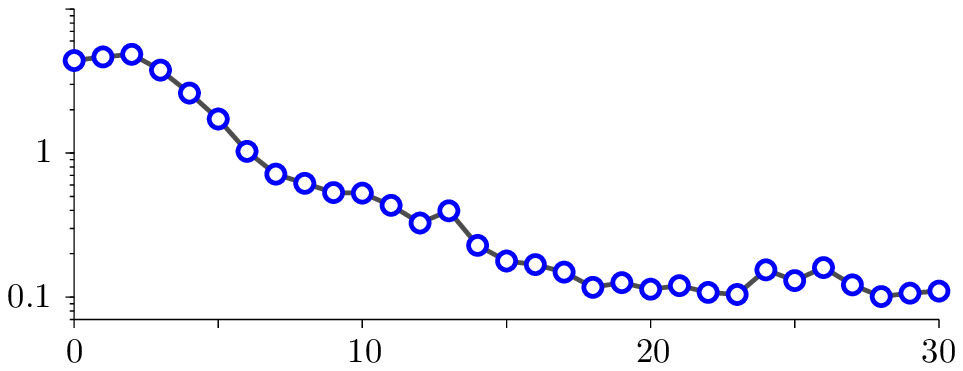}
  \end{center}
  \caption{Errors $\norm{y(n\persym+\cdot)-\yref}$ for 
$0\leq n\leq 30$ ($\log$-scale).
  }
  \label{fig:OscRoberrors}
\end{figure}

The asymptotic error estimate in Theorem~\ref{thm:RORPapprox} can be approximated numerically using the operator $\Pop$ measured from the system's response.  This way we can show that the regulation error is of order
\eq{
\norm{y(n\persym+\cdot)-\yref(\cdot)}_{\Lp[2]} \approx 0.1
}
as $n\to \infty$.

\vspace{-.2cm}

\section{Controller Design for a Periodic Heat Equation}
\label{sec:heatsys}

In this section we design controllers for a stable nonautonomous heat equation with boundary disturbances.
The system is 
determined by the
partial differential equation
  \eq{
  x_t(\xi,t)&= \frac{1}{6} \Delta x(\xi,t) + a(t)\chi_{\Omega_0}x(\xi,t) + 4\chi_{\Omega_1} u(t)\\[1ex]
  x(\xi,t)&=0 \quad 
  \forall \xi\in \partial\Omega\setminus \Gamma_0, \quad
  \pd{x}{n}(\xi,t)=\wdist(t) \quad 
  \forall \xi \in\Gamma_0\\
  y(t)&=4\int_{\Omega_2} x(\xi,t)d\xi 
  }
on the Hilbert space $X=\Lp[2](\Omega)$, where $\Omega=[0,1]\times [0,1]$, $\Omega_0=[0,1]\times [1/4,3/4]$, 
and $\Gamma_0 = \setm{(\xi_1,0)\in \Omega}{0\leq \xi_1\leq 1}$.
The control and observation are distributed over the regions $\Omega_1=[0,1/4]\times [0,1]$ and $\Omega_2=[3/4,1]\times [0,1]$, respectively, and $a(\cdot)$ is a $2\pi$-periodic function such that
\eq{
a(t) = 
\left\{
\begin{array}{cl}
  1&0\leq t<\pi\\
  3&\pi\leq t<3\pi/2\\
  2&3\pi/2\leq t<2\pi
\end{array}
\right.
}
for $t\in [0,2\pi]$.
The evolution family $\FM_A(t,s)$ is obtained as a composition of the strongly continuous semigroups $T_1(t)$, $T_2(t)$, and $T_3(t)$ generated by the operators $A_1=\frac{1}{6}\Delta+\chi_{\Omega_0}(\cdot)$, $A_2=\frac{1}{6}\Delta+3\chi_{\Omega_0}(\cdot)$, and $A_3=\frac{1}{6}\Delta+2\chi_{\Omega_0}(\cdot)$, respectively. The domains of the generators are
$\Dom(A_k)=\setm{x\in H^2}{x(\xi)=0 ~ \mbox{on} ~ \xi\in \partial\Omega\setminus \Gamma_0, ~ \pd{x}{\xi_2}(\xi)=0 ~ \mbox{on} ~ \xi\in  \Gamma_0}$
for $k=1,2,3$.
In particular, we have
\eq{
\FM_A(2\pi,0) 
= T_3(\pi/2)T_2(\pi/2)T_1(\pi).
}
The boundary disturbance corresponds to $B_d=\gd_{\Gamma_0}\in \Lin(\C,X_{-1})$. Since $B_d$ is admissible with respect to $A_k$ for all $k=1,2,3$, the operator $\Ltedop{B}_d$ is well-defined and bounded.

For simulations, the state of the heat equation was approximated with a finite difference scheme with 12 equally spaced points in both spatial dimensions. Precise characterization of the range of the operator $\Pop$ for the periodic heat system would be difficult, but it is immediate that all functions $y\in \ran(\Pop)$ must possess a certain level of smoothness. Therefore,
achieving exact output tracking of reference signals that are not continuously differentiable will be impossible in this example.

\subsection{Feedforward Control}

We begin by designing a control law to achieve output tracking of the $2\pi$-periodic reference signal $\yref(t) = -\frac{1}{3}\sin(3t)+\sin(t)$ despite the disturbance signal $\wdist(t) = 2\cos(2t)+3\sin(2t)$ on the boundary. Figure~\ref{fig:HeatFFoutput} shows the output of the controlled system for the initial state $x_0(\xi)\equiv -1$. The operator $\Pop$ and the function $\Pdop \wdist$ were approximated using measurements from the system on $[n\persym,(n+1)\persym)$ for $n=12$ using $\Span \set{\varphi_k}_{k=-10}^{10}$ with $\varphi_k = \frac{1}{\sqrt{2\pi}}e^{ik\cdot}$ as an approximation for the spaces $U=Y=\Lp[2](0,2\pi)$.

\begin{figure}[h!]
  \begin{center}
\includegraphics[width=.68\linewidth]{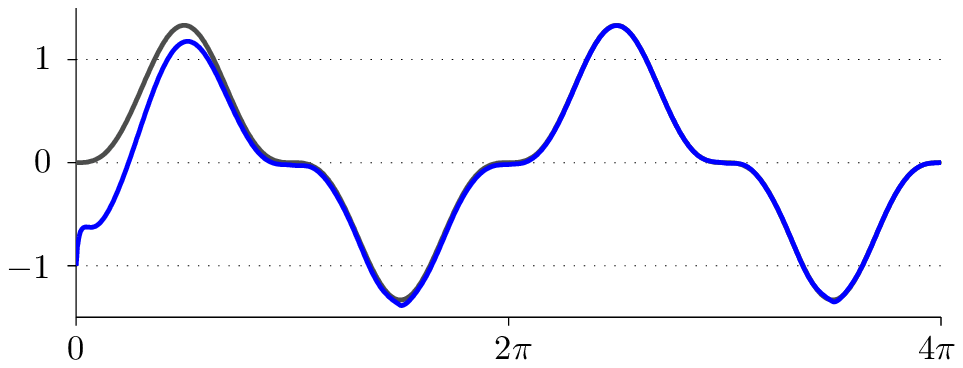}
  \end{center}
  \caption{Output (blue) with the feedforward control law.}
  \label{fig:HeatFFoutput}
  \end{figure}

\vspace{-.5cm}

\subsection{Approximate Robust Control}

We will now construct the controller in Theorem~\ref{thm:RORPapprox} to achieve approximate robust output regulation for the periodic heat equation. We choose $Y_N=\Span \set{\varphi_k}_{k=-7}^7$ where $\varphi_k = \frac{1}{\sqrt{2\pi}}e^{ik\cdot}$ and the resulting controller has dimension $r=\dim Y_N=15$. For the simulations we approximate $U=Y=\Lp[2](0,2\pi)$ with $\Span \set{\varphi_k}_{k=-14}^{14}$, and denote by $\Pop_N$ the corresponding approximation of $Q_N\Pop : U\to Y_N$.

For the controller we choose $Z=\C^{15}$, $G_1 = I$,
 and let $Q_N$ be the projection onto $Y_N$. 
 To achieve $\gs(G_{20}Q_N\Pop K_0)\subset \C_-$ we choose $G_{20}=\diag(\gs_1,\ldots,\gs_{15})\inv V_1^\ast$ and $K_0=-\tilde{V}_2$ where $V_1$, $V_2$, and $\Sigma=\diag(\gs_1,\ldots,\gs_{15})$ $\in \R^{15\times 29}$ are from the singular value decomposition $V_1\Lambda V_2^\ast$ of $\Pop_N\in \C^{15\times 29}$, and $\tilde{V}_2$ consists of the first $r=15$ columns of $V_2$. 
We used the procedure in Remark~\ref{rem:Oscepschoice} to choose $\eps=0.35$. Figures~\ref{fig:HeatRoboutput} and~\ref{fig:HeatRoberrors} show the behaviour of the output and the regulation error for a $2\pi$-periodic triangular reference signal and the disturbance $w_d(t)=0.3\sin(t)$, and for the intial states $x_0(\xi)\equiv 0$ and $z_0=0$ of the system and the controller.

\begin{figure}[h!]
  \begin{center}
\includegraphics[width=.68\linewidth]{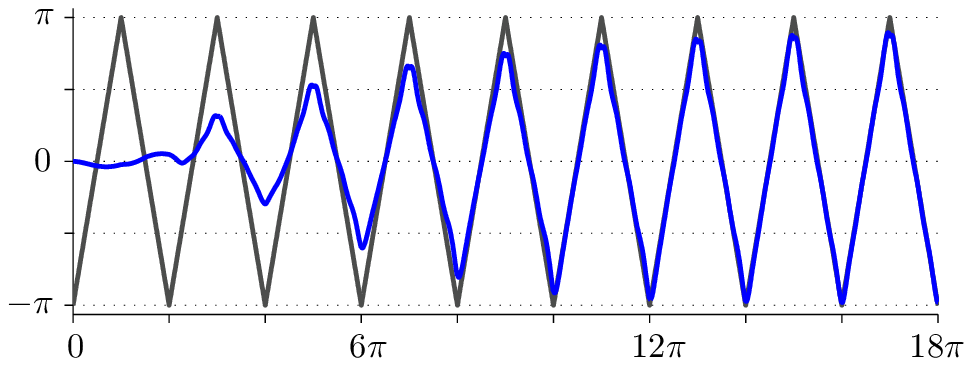}
  \end{center}
  \caption{Output (blue) with the feedback controller.}
  \label{fig:HeatRoboutput}
\begin{center}
  \includegraphics[width=.68\linewidth]{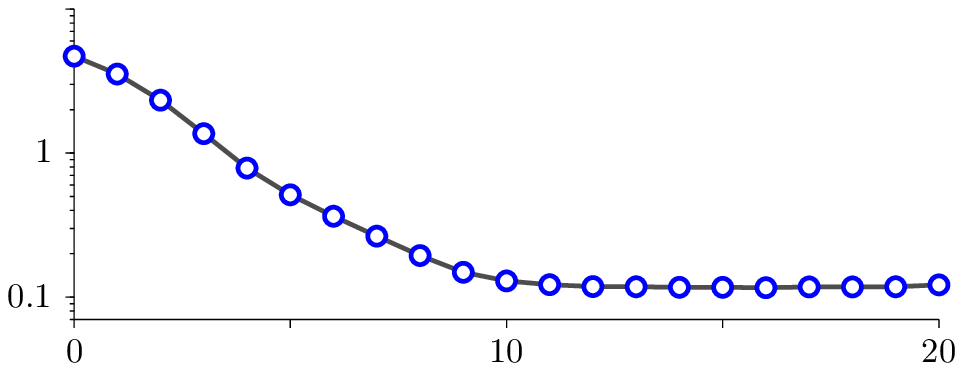}
  \end{center}
  \caption{Errors $\norm{y(n\persym+\cdot)-\yref}$ for 
$0\leq n\leq 20$ ($\log$-scale).
  }
  \label{fig:HeatRoberrors}
\end{figure}

The asymptotic error estimate in Theorem~\ref{thm:RORPapprox} can be approximated numerically using finite difference approximation and the operator $\Pop$ measured from simulations. Based on this approximation we get that the regulation error is of order
\eq{
\norm{y(n\persym+\cdot)-\yref(\cdot)}_{\Lp[2]} \approx 0.12
}
as $n\to \infty$.

\section{Conclusions}
\label{sec:conclusions}

In this paper we have studied the construction of controllers for output regulation and robust output regulation of continuous-time periodic systems. The constructions are based on expressing the original periodic system as an autonomous discrete-time system using the lifting technique. 
At the same time, the presented results also offer new methods for constructing controllers for output regulation of autonomous finite and infinite-dimensional systems in the situations where the signals $\yref(\cdot)$ and $\wdist(\cdot)$ are $\persym$-periodic functions.

Throughout the paper we have concentrated on the case where the reference and disturbance signals have the same period length $\persym>0$ as the system's parameters. The most important topic for future research is to extend the controller constructions for more general signals $\yref(\cdot)$ and $\wdist(\cdot)$ that are not periodic functions, or have different period lengths.


\end{document}